\newtheorem{theorem}{Theorem}[section]
\newtheorem{lemma}[theorem]{Lemma}
\newtheorem{proposition}[theorem]{Proposition}
\newtheorem{corollary}[theorem]{Corollary}
\theoremstyle{definition}
\newtheorem{remark}[theorem]{Remark}
\numberwithin{equation}{section}
\begin{document}

\title{Extension of Euclidean operator radius inequalities}

\author[M.S. Moslehian, M. Sattari, K. Shebrawi]{Mohammad Sal Moslehian$^1$, Mostafa Sattari$^1$ and Khalid Shebrawi$^2$ }

\address{$^1$ Department of Pure Mathematics, Center Of Excellence in Analysis on Algebraic Structures (CEAAS), Ferdowsi University of Mashhad, P. O. Box 1159, Mashhad 91775, Iran}
\email{moslehian@um.ac.ir}
\email{msattari.b@gmail.com}

\address{$^2$ Department of Applied Sciences, Al-Balqa' Applied University Salt, Jordan; \newline
Department of Mathematics, College of Science, Qassim University, Qassim, Saudi Arabia}
\email{khalid@bau.edu.jo}

\subjclass[2010]{47A12, 47B15, 47A30, 47A63}

\keywords{Euclidean operator radius, numerical radius, Cartesian decomposition, self-adjoint operator.}

\begin{abstract}
To extend the Euclidean operator radius, we define $w_p$ for an $n$-tuples of operators $(T_1,\ldots, T_n)$ in $\mathbb{B}(\mathscr{H})$ by $w_p(T_1,\ldots,T_n):= \sup_{\| x \| =1} \left(\sum_{i=1}^{n}| \langle T_i x, x \rangle |^p \right)^{\frac1p}$ for $p\geq1$.
We generalize some inequalities including Euclidean operator radius of two operators to those involving $w_p$. Further we obtain some lower and upper bounds for $w_p$. Our main result states that if $f$ and $g$ are nonnegative continuous functions on $\left[ 0,\infty \right) $
satisfying $f\left( t\right) g\left(t\right) =t$ for all $t\in \left[ 0,\infty \right) $, then
\begin{equation*}
w_{p}^{rp}\left( A_{1}^{\ast }T_{1}B_{1},\ldots ,A_{n}^{\ast
}T_{n}B_{n}\right) \leq \frac{1}{2}\left\Vert \underset{i=1}{\overset{n}{\sum }}\Big(
\left[ B_{i}^{\ast
}f^{2}\left( \left\vert T_{i}\right\vert \right) B_{i}\right] ^{rp}+\left[
A_{i}^{\ast }g^{2}\left( \left\vert T_{i}^{\ast }\right\vert \right)
A_{i}\right] ^{rp}\Big)\right\Vert
\end{equation*}
for all $p\geq 1$, $r\geq 1$ and operators in $ \mathbb{B}(\mathscr{H})$.
\end{abstract} \maketitle

\section{Introduction}

Let $\mathbb{B}(\mathscr{H})$ be the $C^*$-algebra of all bounded linear operators on a Hilbert space $(\mathscr{H}, \langle \cdot,\cdot\rangle)$. The numerical radius of $A\in \mathbb{B}(\mathscr{H})$ is defined by
\begin{equation*}
w(A)= \sup \{ | \langle A x, x \rangle | : x \in \mathscr{H} , \| x \| =1 \}.
\end{equation*}
It is well known that $w(\cdot)$ defines a norm on $\mathbb{B}(\mathscr{H})$, which is equivalent to the usual operator norm $\| \cdot \|$. Namely, we have
\begin{eqnarray*}
\frac{1}{2} \| A \| \leq w(A) \leq \| A \|.
\end{eqnarray*}
for each $A \in \mathbb{B}(\mathscr{H})$. It is known that if $A\in \mathbb{B}(\mathscr{H})$ is self-adjoint, then $w(A)= \| A \|.$
An important inequality for $w(A)$ is the power inequality stating that $w(A^n)\leq w^n(A)$ for $n=1,2,\ldots$.
There are many inequalities involving numerical radius; see \cite{2, H1, H2, SAL, Y1, Y2} and references therein.\\
The Euclidean operator radius of an $n$-tuple $(T_1,\ldots,T_n)\in \mathbb{B}(\mathscr{H})^{(n)}:= \mathbb{B}(\mathscr{H})\times \ldots \times \mathbb{B}(\mathscr{H})$ was defined in \cite{6} by
\begin{equation*}
w_e(T_1,\ldots,T_n):= \sup_{\| x \| =1} \left(\sum_{i=1}^{n}| \langle T_i x, x \rangle |^2 \right)^{\frac12}\,.
\end{equation*}
The particular cases $n=1$ and $n=2$ are numerical radius and Euclidean operator radius. Some interesting properties of this radius were obtained in \cite{6}. For example, it is established that
\begin{eqnarray} \label{1.1}
\frac{1}{2\sqrt{n}}\left\|\sum_{i=1}^{n}T_iT_i^* \right\|^\frac12\leq w_e(T_1,\cdots,T_n)\leq \left\|\sum_{i=1}^{n}T_iT_i^* \right\|^\frac12.
\end{eqnarray}
We also observe that if $ A =B+ iC$ is the Cartesian decomposition of $A$, then
\[w_e ^2 (B,C) = \mathop{\sup}_{\| x\| =1} \lbrace{|\langle Bx,x \rangle |^2 +|\langle Cx,x \rangle |^2 }\rbrace =\mathop{\sup}_{\| x\| =1} |\langle Ax,x \rangle |^2 = w^2 (A).\]
By the above inequality and $A^*A+AA^* =2(B^2+C^2)$, we have
\[\frac{1}{16} \|A^*A +AA^*\| \leq w^2 (A) \leq \frac{1}{2} \|A^*A +AA^*\|.\]
We define $w_p$ for $n$-tuples of operators $(T_1,\ldots, T_n)\in \mathbb{B}(\mathscr{H})^{(n)}$ for $p\geq1$ by
\begin{equation*}
w_p(T_1,\ldots,T_n):= \sup_{\| x \| =1} \left(\sum_{i=1}^{n}| \langle T_i x, x \rangle |^p \right)^{\frac1p}\,.
\end{equation*}
It follows from Minkowski's inequality for two vectors $a=(a_1, a_2)$ and $b=(b_1, b_2)$, namely,
$$\left(|a_1+b_1|^p+|a_2+b_2|^p\right)^\frac 1p\leq\left(|a_1|^p+|a_2|^p\right)^\frac 1p+
\left(|b_1|^p+|b_2|^p\right)^\frac 1p \quad\text{for}\quad p>1$$
that $w_p$ is a norm.

Moreover $w_{p},p\geq 1,$ for $n$-tuple of operators $\left( T_{1},\ldots
,T_{n}\right) \in \mathbb{B}(\mathscr{H})^{(n)}$ satisfies the
following properties:

(i) $w_{p}\left( T_{1},\ldots ,T_{n}\right) =0\Leftrightarrow T_{1}=\ldots
=T_{n}=0.$

(ii) $w_{p}\left( \lambda T_{1},\ldots ,\lambda T_{n}\right) =\left\vert
\lambda \right\vert w_{p}\left( T_{1},\ldots ,T_{n}\right) $ for all $%
\lambda \in
\mathbb{C}
.$

(iii) $w_{p}\left( T_{1}+T_{1}%
{\acute{}}%
,\ldots ,T_{n}+T_{n}%
{\acute{}}%
\right) \leq w_{p}\left( T_{1},\ldots ,T_{n}\right) +w_{p}\left( T_{1}%
{\acute{}}%
,\ldots ,T_{n}%
{\acute{}}%
\right) $ for $\left( T_{1}%
{\acute{}}%
,\ldots ,T_{n}%
{\acute{}}%
\right) \in \mathbb{B}(\mathscr{H})^{(n)}.$

(iv) $w_{p}\left( X^{\ast }T_{1}X,\ldots ,X^{\ast }T_{n}X\right) \leq \left\Vert
X\right\Vert ^{2}w_{p}\left( T_{1},\ldots ,T_{n}\right) $ for $X\in \mathbb{B}(\mathscr{H}).$

Dragomir \cite{1} obtained some inequalities for the Euclidean operator radius $w_e(B,C)={\sup}_{\|x \|=1}\left(|\langle Bx ,x \rangle |^2 +|\langle Cx ,x \rangle |^2\right)^{\frac{1}{2}}$
 of two bounded linear operators in a Hilbert space. In section \ref{sec1} of this paper we extend some his results including inequalities for the Euclidean operator radius of linear operators to $w_p\,\,(p\geq1) $. In addition, we apply some known inequalities for getting new inequalities for $w_p$ in two operators.\\
In section \ref{sec2} we prove inequalities for $w_p$ for $n$-tuples of operators. Some of our result in this section, generalize some inequalities in section \ref{sec1}. Further, we find some lower and upper bounds for $w_p$.


\section{Inequalities for $w_p$ for two operators}
\label{sec1}
To prove our generalized numerical radius inequalities, we need several known lemmas. The first lemma is a simple result of the classical Jensen inequality and a generalized mixed Cauchy--Schwarz inequality \cite{5, MOS,Kit}.
\begin{lemma} \label{le1}
 For $ a,b \geq0$, $0 \leq \alpha \leq 1$ and $r\neq 0$,
\begin{enumerate}
\item[(a)]
$ \displaystyle
a^\alpha b^{1-\alpha} \leq \alpha a +(1-\alpha)b \leq
\left[\alpha a^r +(1-\alpha) b^r\right]^{\frac{1}{r}} \quad\text{for}\quad r\geq 1$,
\item[(b)]
If $\displaystyle
 A \in \mathbb{B}(\mathscr{H}), then~
| \langle Ax, y \rangle | ^2 \leq \langle | A | ^{2 \alpha } x,x \rangle \langle | A^*| ^{2(1-\alpha) } y,y \rangle$ for all $x,y\in \mathscr{H},$ where $|A|=(A^*A)^\frac12.$
\item[(c)]
Let $A\in \mathbb{B}(\mathscr{H}) $, and $f$ and $g$ be
nonnegative continuous functions on $\left[ 0,\infty \right) $ satisfying $f\left( t\right) g\left( t\right) =t$ for all $t\in %
\left[ 0,\infty \right) $. Then%
\begin{equation*}
\left\vert \left\langle Ax,y\right\rangle \right\vert \leq \left\Vert
f\left( \left\vert A\right\vert \right) x\right\Vert \left\Vert g\left(
\left\vert A^{\ast }\right\vert \right) y\right\Vert
\end{equation*}%
for all $x, y \in \mathscr{H}$.
\end{enumerate}
\end{lemma}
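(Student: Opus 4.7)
Each part is classical; the plan is to derive (a) from Jensen's inequality and (b), (c) from the polar decomposition together with the ordinary Cauchy--Schwarz inequality in $\mathscr{H}$. For (a), the left inequality is the weighted AM--GM, obtained from the concavity of $\log$: $\alpha \log a + (1-\alpha)\log b \leq \log(\alpha a + (1-\alpha) b)$, and then exponentiating. For the right inequality, convexity of $t \mapsto t^r$ on $[0,\infty)$ when $r\geq 1$ gives, via two-point Jensen, $(\alpha a + (1-\alpha) b)^r \leq \alpha a^r + (1-\alpha) b^r$; taking $r$-th roots gives the conclusion.

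For (b), let $A = U|A|$ be the polar decomposition, where $U$ is the partial isometry from $\overline{\operatorname{Ran}(|A|)}$ onto $\overline{\operatorname{Ran}(A)}$. Using the continuous functional calculus, factor $|A| = |A|^{\alpha}\,|A|^{1-\alpha}$ and write
\begin{equation*}
\langle Ax, y\rangle \;=\; \langle U|A|^{\alpha}\,|A|^{1-\alpha} x,\, y\rangle \;=\; \langle |A|^{\alpha} x,\, |A|^{1-\alpha} U^* y\rangle.
\end{equation*}
Cauchy--Schwarz then yields $|\langle Ax,y\rangle|^2 \leq \langle |A|^{2\alpha} x, x\rangle\,\langle U|A|^{2(1-\alpha)} U^* y, y\rangle$, and the second factor equals $\langle |A^*|^{2(1-\alpha)} y, y\rangle$ by the transfer identity $U h(|A|) U^* = h(|A^*|)$, valid for continuous $h$ with $h(0)=0$. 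This identity follows inductively from $U|A|U^* = |A^*|$, which is obtained by taking positive square roots in $AA^* = U|A|^2 U^*$, and is then extended by Stone--Weierstrass. The boundary case $\alpha = 1$ reduces to ordinary Cauchy--Schwarz.

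For (c), the same strategy applies with the factorisation $|A| = f(|A|)\,g(|A|)$ in place of the power split, giving $\langle Ax, y\rangle = \langle f(|A|)x,\, g(|A|) U^* y\rangle$, whence Cauchy--Schwarz produces $|\langle Ax,y\rangle| \leq \|f(|A|)x\|\cdot \|g(|A|) U^* y\|$. The final step is the operator inequality $U g^{2}(|A|) U^{*} \leq g^{2}(|A^*|)$, which upgrades $\|g(|A|) U^* y\|$ to $\|g(|A^*|) y\|$. Writing $g = g(0) + \tilde g$ with $\tilde g(0) = 0$, the transfer identity gives $U\tilde g(|A|)U^* = \tilde g(|A^*|)$ and $U\tilde g^{2}(|A|)U^* = \tilde g^{2}(|A^*|)$; expanding $g^2$ and subtracting yields
\begin{equation*}
g^{2}(|A^*|) - U g^{2}(|A|) U^* \;=\; g(0)^{2}(I - UU^*) \;\geq\; 0.
\end{equation*}
The only subtle point, and the main obstacle, is precisely this: because $U$ is merely a partial isometry, the transfer identity fails for constants, so the case $g(0) \neq 0$ has to be handled by isolating the defect into the manifestly nonnegative correction $g(0)^{2}(I-UU^*)$. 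Everything else is functional calculus and Cauchy--Schwarz.
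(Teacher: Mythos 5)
Your proof is correct. Note that the paper does not prove this lemma at all: it is stated as ``a simple result of the classical Jensen inequality and a generalized mixed Cauchy--Schwarz inequality'' with citations to Mitrinovi\'c--Pe\v{c}ari\'c--Fink, Fujii et al.\ and Kittaneh, so there is no in-paper argument to compare against. Your derivation follows the standard route of those sources: (a) is weighted AM--GM plus convexity of $t\mapsto t^{r}$, and (b), (c) are Kittaneh's mixed Schwarz inequalities obtained from the polar decomposition $A=U|A|$, the factorisations $|A|=|A|^{\alpha}|A|^{1-\alpha}$ and $|A|=f(|A|)\,g(|A|)$, Cauchy--Schwarz, and the transfer identity $Uh(|A|)U^{*}=h(|A^{*}|)$ for continuous $h$ with $h(0)=0$. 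You correctly identify and resolve the one genuinely delicate point, namely that the transfer identity fails for constants because $U$ is only a partial isometry, by isolating the defect as $g(0)^{2}(I-UU^{*})\geq 0$ (and by treating $\alpha=1$ in (b) separately); this is exactly the care the cited proofs require. Two trivial remarks: in (a) the logarithmic argument presupposes $a,b>0$, so the cases $a=0$ or $b=0$ should be dispatched by inspection (the left side vanishes unless $\alpha\in\{0,1\}$, where equality holds); and the identity $U|A|^{n}U^{*}=(U|A|U^{*})^{n}$ in your induction uses $U^{*}U|A|=|A|$, which holds because $U^{*}U$ is the projection onto $\overline{\operatorname{Ran}|A|}$ --- worth stating explicitly, but not a gap.
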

\begin{lemma} [\textbf{McCarthy inequality \cite{Yuki}}] \label{le2}
Let $A \in \mathbb{B}(\mathscr{H})$, $ A \geq 0$ and let $x \in \mathscr{H}$ be any unit vector. Then
\begin{enumerate}
\item[(a)]
$\langle A x, x \rangle ^r \leq \langle A^r x, x \rangle \quad \text{for} \quad r \geq 1$,
\item[(b)]
$\langle A^r x , x \rangle \leq \langle Ax, x \rangle ^r \quad \text{for} \quad 0<r \leq 1$.
\end{enumerate}
\end{lemma}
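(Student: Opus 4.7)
The plan is to derive both parts by applying Jensen's inequality to the scalar spectral measure of $A$. First I would invoke the spectral theorem for the positive operator $A$: there is a projection-valued measure $E(\cdot)$ supported on $\sigma(A)\subset [0,\infty)$ such that, for any unit vector $x\in \mathscr{H}$, the set function $\mu_x(\Omega):=\langle E(\Omega)x,x\rangle$ is a Borel probability measure on $\sigma(A)$. The Borel functional calculus then represents
\begin{equation*}
\langle A^{s}x,x\rangle=\int_{\sigma(A)}\lambda^{s}\,d\mu_{x}(\lambda)
\end{equation*}
for every $s>0$; this is legitimate because $\sigma(A)$ is compact and hence $\lambda\mapsto \lambda^{s}$ is bounded and continuous on $\sigma(A)$.

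For part (a), observe that $\varphi(t)=t^{r}$ is convex on $[0,\infty)$ whenever $r\geq 1$. Jensen's inequality applied to the probability measure $\mu_{x}$ and the convex function $\varphi$ yields
\begin{equation*}
\langle Ax,x\rangle^{r}=\left(\int_{\sigma(A)}\lambda\,d\mu_{x}(\lambda)\right)^{r}\leq \int_{\sigma(A)}\lambda^{r}\,d\mu_{x}(\lambda)=\langle A^{r}x,x\rangle.
\end{equation*}
Part (b) follows from exactly the same argument, reversed in direction: when $0<r\leq 1$ the map $\varphi(t)=t^{r}$ is concave on $[0,\infty)$, so Jensen's inequality gives $\int \lambda^{r}\,d\mu_{x}\leq (\int \lambda\,d\mu_{x})^{r}$, which is precisely $\langle A^{r}x,x\rangle\leq \langle Ax,x\rangle^{r}$.

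There is no genuine technical obstacle; the only care needed is to track the convexity/concavity threshold of $t\mapsto t^{r}$ at $r=1$ and to verify that $\mu_{x}$ is a probability measure (which uses $\|x\|=1$ together with $E(\sigma(A))=I$). Equality in either estimate holds exactly when $\mu_{x}$ is a Dirac mass, i.e., when $x$ lies in a single spectral eigenspace of $A$. An alternative route, avoiding the spectral theorem, is a direct finite-rank spectral approximation in the self-adjoint case combined with a limiting argument, but the Jensen-based argument above is cleaner and uniform in $r$.
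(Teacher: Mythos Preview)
Your argument is correct: the spectral theorem furnishes the scalar probability measure $\mu_{x}$, and Jensen's inequality applied to the convex (respectively concave) function $t\mapsto t^{r}$ on $[0,\infty)$ immediately yields parts (a) and (b). The only minor quibble is the remark on equality: saying that $x$ ``lies in a single spectral eigenspace'' tacitly assumes point spectrum, whereas the precise condition is that $\mu_{x}$ is a Dirac mass, i.e., $x$ is an eigenvector of $A$. This is peripheral to the proof itself.

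As for comparison with the paper: the paper does not prove this lemma at all. It is stated as a known preliminary result with a citation to \cite{Yuki} (the Mond--Pe\v{c}ari\'c method monograph) and then used as a tool in the proofs of the main theorems. Your spectral-theorem-plus-Jensen argument is in fact the standard proof of McCarthy's inequality, so you have supplied exactly what the paper outsources to the reference.
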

Inequalities of the following lemma were obtained for the first time by Clarkson\cite{5}.
\begin{lemma} \label{le3}
Let $X$ be a normed space and $x,y\in X$. Then for all $p\geq 2$ with $\frac1p+\frac1q=1$,
\begin{enumerate}
\item[(a)]
$2(\|x\|^p+\|y\|^p)^{q-1} \leq \|x+y\|^q+\|x-y\|^q$,
\item[(b)]
$2(\|x\|^p+\|y\|^p) \leq \|x+y\|^p+\|x-y\|^p\leq 2^{p-1}(\|x\|^p+\|y\|^p)$,
\item[(c)]
$\|x+y\|^p+\|x-y\|^p \leq 2(\|x\|^q+\|y\|^q)^{p-1}.$
\end{enumerate}
If $1<p\leq 2$ the converse inequalities hold.
\end{lemma}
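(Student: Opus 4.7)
The plan is to regard all three parts as the classical Clarkson inequalities and to prove each by first establishing a scalar version for $a,b\in\mathbb{C}$ and then transferring the pointwise bound to the ambient space (the natural setting is $L^p(\mu)$ or a Hilbert space, where a scalar pointwise inequality integrates into the stated $\|\cdot\|$-form). The basic tools I would use are the parallelogram identity $|a+b|^2+|a-b|^2 = 2(|a|^2+|b|^2)$, convexity of $t\mapsto t^s$ for $s\geq 1$, and H\"older's inequality with conjugate exponents $p,q$.

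For part (b), I would write $|a+b|^p+|a-b|^p = (|a+b|^2)^{p/2} + (|a-b|^2)^{p/2}$ and apply convexity of $t\mapsto t^{p/2}$ (valid since $p\geq 2$) together with the parallelogram identity to obtain
\[
|a+b|^p+|a-b|^p \;\geq\; 2(|a|^2+|b|^2)^{p/2} \;\geq\; 2(|a|^p+|b|^p),
\]
the last step by superadditivity of $t^{p/2}$ for $p\geq 2$. The upper bound $|a+b|^p+|a-b|^p \leq 2^{p-1}(|a|^p+|b|^p)$ then follows from the lower bound by the self-dual substitution $(a,b)\mapsto((a+b)/2,(a-b)/2)$, which exchanges the two sides of the inequality; integration transfers this to (b) in $X$.

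Parts (a) and (c) are the two halves of Clarkson's second-type inequality and are mutually dual under $1/p+1/q=1$. My plan is to derive them from Hanner's inequality
\[
\|x+y\|^p+\|x-y\|^p \;\geq\; (\|x\|+\|y\|)^p+\bigl|\|x\|-\|y\|\bigr|^p\qquad(p\geq 2),
\]
from which (a) and (c) follow by elementary algebraic manipulation; alternatively, (c) is equivalent to the statement that the linear map $(a,b)\mapsto(a+b,a-b)$ has operator norm at most $2^{1/p}$ from $\ell^q_2$ to $\ell^p_2$, which is immediate from Riesz--Thorin interpolation between the endpoint cases $(p,q)=(2,2)$ and $(p,q)=(\infty,1)$. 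For $1<p\leq 2$ the converse inequalities follow by swapping the roles of $p$ and $q$, which reverses the convexity/concavity steps throughout.

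I expect the main obstacle to be the sharp constant $2$ in (a) and (c): these inequalities are tight (equality at $a=b$ and at $(a,b)=(1,0)$), so a naive H\"older argument yields only the weaker constant $4$, and one must invoke Hanner's inequality or an interpolation argument to recover the precise constant. Part (b), by contrast, is essentially mechanical once the convexity and superadditivity observations are in hand.
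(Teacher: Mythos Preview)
The paper does not actually prove this lemma: it is quoted as Clarkson's result with a reference, and the only in-text argument is the observation that (a) and (c), and likewise the two halves of (b), interchange under the substitution $(x,y)\mapsto\bigl(\tfrac{x+y}{2},\tfrac{x-y}{2}\bigr)$ --- precisely the self-duality you also invoke. So there is no paper-side proof to compare against beyond that remark.

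That said, your plan has two real issues. First, the lemma as written (``let $X$ be a normed space'') is simply false: Clarkson's inequalities hold for scalars and in $L^p(\mu)$, but not in an arbitrary normed space --- e.g.\ in $\ell^\infty$ with $x=(1,0)$, $y=(0,1)$, $p=2$, the left inequality of (b) becomes $4\le 2$. Your aside about ``the natural setting'' shows you suspect this, but then ``integration transfers this to (b) in $X$'' is meaningless for general $X$; the paper in fact only ever applies the lemma with $x,y\in\mathbb{C}$ (see the proofs of Propositions~\ref{pro2.10} and~\ref{pro2.15}), so the scalar case is what one should establish. Second, you have Hanner's inequality in the wrong direction: for $p\ge 2$ it reads
\[
\|x+y\|^p+\|x-y\|^p \;\le\; (\|x\|+\|y\|)^p+\bigl|\,\|x\|-\|y\|\,\bigr|^p,
\]
not $\ge$; with the correct sign this gives (c) after a scalar estimate, and (a) then follows by substitution. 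Your Riesz--Thorin alternative is correct and yields the sharp constant directly, so that route is the cleaner one. Your argument for (b) is fine at the scalar level.
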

Making the transformations $x\rightarrow \frac{x+y}{2}$ and $y\rightarrow \frac{x-y}{2}$ we observe that inequalities (a) and (c) in Lemma \ref{le3} are equivalent and so are the first and the second inequalities of (b).
First of all we obtain a relation between $w_p$ and $w_e$ for $p\geq 1$.
\begin{proposition}
Let $B,C\in \mathbb{B}(\mathscr{H}) $. Then%
\begin{equation*}
w_{p}\left( B,C\right) \leq w_{q}\left( B,C\right) \leq 2^{\frac{1}{q}-\frac{%
1}{p}}w_{p}\left( B,C\right)
\end{equation*}%
for $p\geq q\geq 1.$ In particular
\begin{equation}\label{2.1}
w_{p}\left( B,C\right) \leq w_{e}\left( B,C\right) \leq 2^{\frac{1}{2}-\frac{%
1}{p}}w_{p}\left( B,C\right)
\end{equation}%
for $p\geq 2,$ and%
\begin{equation*}
2^{\frac{1}{2}-\frac{1}{p}}w_{p}\left( B,C\right) \leq w_{e}\left(
B,C\right) \leq w_{p}\left( B,C\right)
\end{equation*}%
for $1\leq p\leq 2.$
\end{proposition}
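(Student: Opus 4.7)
The proposition amounts to the familiar fact that the $\ell^p$ norms of a fixed finite sequence are monotone decreasing in $p$, together with the sharp reverse inequality coming from Hölder (or the power-mean inequality). The plan is to reduce everything, pointwise in the unit vector $x$, to the inequality
\begin{equation*}
\bigl(a^p+b^p\bigr)^{1/p}\;\leq\;\bigl(a^q+b^q\bigr)^{1/q}\;\leq\;2^{\frac{1}{q}-\frac{1}{p}}\bigl(a^p+b^p\bigr)^{1/p}
\end{equation*}
for $a,b\ge 0$ and $p\ge q\ge 1$, applied with $a=|\langle Bx,x\rangle|$ and $b=|\langle Cx,x\rangle|$, and then to take suprema over unit vectors.

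For the left inequality, I would invoke monotonicity of $\ell^p$ norms: write $t=q/p\le 1$, set $u=a^p,\;v=b^p$, and use concavity of $s\mapsto s^{t}$ together with $(u+v)^t\ge u^t+v^t$ in the form $(a^p+b^p)^{q/p}\le a^q+b^q$; raising to the power $1/q$ gives exactly $(a^p+b^p)^{1/p}\le(a^q+b^q)^{1/q}$. For the right inequality, I would apply the power-mean inequality to the exponents $q$ and $p$ with $p/q\ge 1$: Hölder's inequality on two terms with conjugate exponents $p/q$ and $p/(p-q)$ yields
\begin{equation*}
a^q+b^q\;=\;a^q\cdot 1+b^q\cdot 1\;\leq\;\bigl(a^p+b^p\bigr)^{q/p}\cdot 2^{1-q/p},
\end{equation*}
and raising to the $1/q$ power produces the factor $2^{1/q-1/p}$.

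Once these two elementary scalar inequalities are established, the operator statement follows by taking the supremum over $\|x\|=1$ on each side, since the supremum of a sum of nonnegative quantities is being bounded termwise. The two particular cases $p\ge 2$ and $1\le p\le 2$ in the statement are then obtained simply by specializing the general assertion to $q=2$ (respectively, to $p$ and $q$ swapped) and recognizing $w_2(B,C)=w_e(B,C)$.

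I do not anticipate a genuine obstacle: the whole argument is a repackaging of the $\ell^q$-versus-$\ell^p$ norm comparison on $\mathbb{R}^2$. The only care needed is to check that the constant $2^{1/q-1/p}$ is indeed sharp in that two-term power-mean step (which it is, attained at $a=b$), so that the same bound transfers to $w_p$ without loss. Writing the argument cleanly, I would first prove the scalar inequality as a one-line lemma, then present the operator conclusion in one display by taking the supremum over unit vectors.
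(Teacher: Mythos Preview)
Your approach is essentially identical to the paper's: both reduce to the scalar $\ell^p$-versus-$\ell^q$ comparison on $\mathbb{R}^2$, apply it pointwise with $a=|\langle Bx,x\rangle|$ and $b=|\langle Cx,x\rangle|$, and take the supremum over unit vectors (the paper invokes Jensen's inequality in both steps where you cite monotonicity of $\ell^p$ norms and H\"older, but the content is the same). One small slip to fix in your write-up: for $0<t\le1$ subadditivity reads $(u+v)^t\le u^t+v^t$, not $\ge$; this is in fact the direction you correctly use in the very next clause.
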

\begin{proof}
An application of Jensen's inequality says that for $a,b>0$ and $p\geq q>0,$ we have%
\begin{equation*}
\left( a^{p}+b^{p}\right) ^{\frac{1}{p}}\leq \left( a^{q}+b^{q}\right) ^{%
\frac{1}{q}}.
\end{equation*}%
Let $x\in \mathscr{H} $ be a unit vector. Choosing $a=\left\vert \left\langle Bx,x\right\rangle \right\vert $ and $%
b=\left\vert \left\langle Cx,x\right\rangle \right\vert $, we have%
\begin{equation*}
\Big( \left\vert \left\langle Bx,x\right\rangle \right\vert ^{p}+\left\vert
\left\langle Cx,x\right\rangle \right\vert ^{p}\Big) ^{\frac{1}{p}}\leq
\Big( \left\vert \left\langle Bx,x\right\rangle \right\vert ^{q}+\left\vert
\left\langle Cx,x\right\rangle \right\vert ^{q}\Big) ^{\frac{1}{q}}.
\end{equation*}%
Now the first inequality follows by taking the supremum over all unit
vectors in $\mathscr{H}$. A simple consequence of the classical Jensen's inequality
concerning the convexity or the concavity of certain power functions says
that for $a,b\geq 0,0\leq \alpha \leq 1$ and $p\geq q$, we have%
\begin{equation*}
\left( \alpha a^{q}+\left( 1-\alpha \right) b^{q}\right) ^{\frac{1}{q}}\leq
\left( \alpha a^{p}+\left( 1-\alpha \right) b^{p}\right) ^{\frac{1}{p}}.
\end{equation*}%
For $\alpha =\frac{1}{2}$, we get%
\begin{equation*}
\left( a^{q}+b^{q}\right) ^{\frac{1}{q}}\leq 2^{\frac{1}{q}-\frac{1}{p}%
}\left( a^{p}+b^{p}\right) ^{\frac{1}{p}}.
\end{equation*}%
Again let $x\in \mathscr{H}$ be a unit vector. Choosing $a=\left\vert \left\langle Bx,x\right\rangle \right\vert $ and $%
b=\left\vert \left\langle Cx,x\right\rangle \right\vert $ we get%
\begin{equation*}
\Big( \left\vert \left\langle Bx,x\right\rangle \right\vert ^{q}+\left\vert
\left\langle Cx,x\right\rangle \right\vert ^{q}\Big) ^{\frac{1}{q}}\leq 2^{%
\frac{1}{q}-\frac{1}{p}}\Big( \left\vert \left\langle Bx,x\right\rangle
\right\vert ^{p}+\left\vert \left\langle Cx,x\right\rangle \right\vert
^{p}\Big) ^{\frac{1}{p}}.
\end{equation*}%
Now the second inequality follows by taking the supremum over all unit
vectors in $\mathscr{H}$.
\end{proof}

On making use of inequality \eqref{2.1} we
find a lower bound for $w_p\,\,(p\geq2).$
\begin{corollary}\label{cor2.5}
If $B,C \in \mathbb{B}(\mathscr{H})$, then for $p\geq 2$
\begin{equation*}
 w_p(B,C)\geq 2^{\frac1p-2} \|B^*B+C^*C\|^\frac12\,.
\end{equation*}
\end{corollary}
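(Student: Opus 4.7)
The plan is to chain together the two inequalities that are already on the table: the comparison \eqref{2.1} between $w_p$ and $w_e$ for $p\geq 2$, and the lower bound \eqref{1.1} for the Euclidean operator radius with $n=2$.

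First, I would rearrange the right half of \eqref{2.1} to read $w_p(B,C) \geq 2^{1/p-1/2}\, w_e(B,C)$, valid for $p\geq 2$. So the task reduces to showing $w_e(B,C) \geq 2^{-3/2}\,\|B^*B+C^*C\|^{1/2}$, because then $2^{1/p-1/2}\cdot 2^{-3/2} = 2^{1/p-2}$, which matches the constant in the statement.

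Second, note the basic symmetry $|\langle Tx,x\rangle| = |\langle T^*x,x\rangle|$, which implies $w_p(T_1,\dots,T_n) = w_p(T_1^*,\dots,T_n^*)$ for every $p\geq 1$; in particular $w_e(B,C) = w_e(B^*,C^*)$. Now I would apply \eqref{1.1} with $n=2$ to the pair $(B^*,C^*)$, using $(B^*)(B^*)^* = B^*B$ and $(C^*)(C^*)^* = C^*C$, obtaining
\begin{equation*}
w_e(B,C) \;=\; w_e(B^*,C^*) \;\geq\; \frac{1}{2\sqrt{2}}\,\|B^*B+C^*C\|^{1/2}.
\end{equation*}

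Combining the two displays gives
\begin{equation*}
w_p(B,C) \;\geq\; 2^{1/p - 1/2}\cdot 2^{-3/2}\,\|B^*B+C^*C\|^{1/2} \;=\; 2^{1/p-2}\,\|B^*B+C^*C\|^{1/2},
\end{equation*}
which is exactly the claim. There is no real obstacle here; the only subtle point is recognising that \eqref{1.1} as stated produces $\sum T_iT_i^*$, whereas the corollary asks for $B^*B+C^*C$, so one must pass to adjoints and invoke the adjoint invariance of $w_e$ (equivalently, of every $w_p$).
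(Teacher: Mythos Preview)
Your proof is correct and follows exactly the same route as the paper: combine the lower bound from \eqref{1.1} (with $n=2$) with the comparison \eqref{2.1}. In fact you are slightly more careful than the paper, which simply writes $w_e(B,C)\geq \tfrac{1}{2\sqrt{2}}\|B^*B+C^*C\|^{1/2}$ without comment; your use of $w_e(B,C)=w_e(B^*,C^*)$ is precisely what justifies the appearance of $B^*B+C^*C$ rather than $BB^*+CC^*$.
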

\begin{proof}
According to inequalities \eqref{1.1} and \eqref{2.1} we can write
\[
w_e(B,C)\geq \frac{1}{2\sqrt{2}}\|B^*B+C^*C\|^\frac12
\]
and
\[
w_p (B,C)\geq 2^{\frac{1}{p}-\frac{1}{2} } w_e (B,C),
\]
respectively. We therefore get desired inequality.
\end{proof}

The next result is concerned with some lower bounds for $w_p$. This consequence has several inequalities as special cases. Our result will be generalized to $n$-tuples of operators in the next section.

\begin{proposition}
Let $B,C\in \mathbb{B}(\mathscr{H})$. Then for $p\geq1$
\begin{equation}\label{2.2}
 w_p (B,C) \geq 2^{\frac{1}{p}-1} \max \left( w(B + C) , w( B- C) \right).
 \end{equation}
 This inequality is sharp.
\end{proposition}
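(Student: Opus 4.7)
The plan is to combine two simple inequalities: a scalar power-mean inequality that converts the $\ell^p$-type sum in $w_p$ into an $\ell^1$-type sum, and the triangle inequality in the scalar quantity $\langle (B\pm C)x,x\rangle = \langle Bx,x\rangle \pm \langle Cx,x\rangle$.

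First, I would record the scalar inequality that for $p\geq 1$ and $a,b\geq 0$,
\begin{equation*}
(a^p+b^p)^{1/p} \geq 2^{\frac{1}{p}-1}(a+b).
\end{equation*}
This is immediate from Jensen's inequality applied to the concave function $t\mapsto t^{1/p}$ (or equivalently the convexity of $t\mapsto t^p$) evaluated at the two numbers $a^p,b^p$ with weights $\tfrac12,\tfrac12$; it is exactly the statement that the $\ell^p$-norm dominates a scalar multiple of the $\ell^1$-norm on $\mathbb{R}^2$. Note that this is essentially the sharp case of part (a) of Lemma~\ref{le1}.

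Next, for any unit vector $x\in\mathscr{H}$, apply this with $a=|\langle Bx,x\rangle|$ and $b=|\langle Cx,x\rangle|$, and combine with the ordinary triangle inequality for complex numbers,
\begin{equation*}
|\langle Bx,x\rangle|+|\langle Cx,x\rangle|\geq |\langle Bx,x\rangle \pm \langle Cx,x\rangle|=|\langle (B\pm C)x,x\rangle|,
\end{equation*}
to obtain
\begin{equation*}
\bigl(|\langle Bx,x\rangle|^p+|\langle Cx,x\rangle|^p\bigr)^{1/p}\geq 2^{\frac{1}{p}-1}|\langle (B\pm C)x,x\rangle|.
\end{equation*}
Taking the supremum over unit vectors $x$ yields $w_p(B,C)\geq 2^{\frac{1}{p}-1}w(B\pm C)$, and then taking the maximum over the two sign choices gives \eqref{2.2}.

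For sharpness, I would exhibit the trivial example $B=C=A$ for any $A\in\mathbb{B}(\mathscr{H})$ with $w(A)>0$: then $w_p(A,A)=2^{1/p}w(A)$ while $w(B+C)=2w(A)$ and $w(B-C)=0$, so both sides equal $2^{1/p}w(A)$. There is no real obstacle here; the only thing to be careful about is the direction of the power-mean inequality (it is the $p\geq 1$ direction that pushes the $\ell^p$-sum above a multiple of the $\ell^1$-sum, which is precisely what we need), and the fact that the triangle inequality with either sign is used to cover both $B+C$ and $B-C$ in a single step.
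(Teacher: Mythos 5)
Your proposal is correct and follows essentially the same route as the paper: the convexity of $t\mapsto t^p$ (equivalently the power-mean inequality) applied to $a=|\langle Bx,x\rangle|$, $b=|\langle Cx,x\rangle|$, followed by the triangle inequality, the supremum over unit vectors, and the example $B=C$ for sharpness. Your sharpness check is even slightly more explicit than the paper's, but there is no substantive difference.
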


\begin{proof}
We use convexity of function $f(t) = t^p\,\,(p\geq1)$ as follows:
\begin{align*}
(|\langle Bx,x \rangle |^p +|\langle Cx,x \rangle | ^p )^{\frac{1}{p}} & \geq 2^{\frac{1}{p} -1} (|\langle Bx,x \rangle | +|\langle Cx,x \rangle |)\\
&\geq 2^{\frac{1}{p} -1} |\langle Bx,x \rangle \pm \langle Cx,x \rangle | \\
&= 2^{\frac{1}{p} -1} |\langle (B \pm C) x,x \rangle |\,.
\end{align*}
Taking supremum over $x\in\mathscr{H}$ with $\|x\|=1$ yields that
\[w_p (B,C) \geq 2^{\frac{1}{p} -1} w(B \pm C).\]
 For sharpness one can obtain the same quantity $2 ^{ \frac{1}{p}} w( B)$ on both sides of the inequality by putting $B=C$.
\end{proof}
\begin{corollary}\label{cor2.10}
If $A=B+iC $ is the Cartesian decomposition of $A$, then for all $p\geq 2$
$$
 w_p (B , C)\geq 2^{ \frac{1}{p}-1} \max \left( \| B+ C \| , \| B -C \| \right),
$$
and
\[
w(A)\geq 2^{ \frac{1}{p}-2} \max \left( \|(1-i) A+(1+i)A^* \| , \| (1+i) A+(1-i)A^* \| \right)
\]
\end{corollary}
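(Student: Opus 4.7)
The plan is to derive both inequalities directly from \eqref{2.2} together with (2.1) and the identity $w(A)=w_e(B,C)$ from the introduction.

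For the first inequality, I would first recall that in the Cartesian decomposition $A=B+iC$ the components $B=\tfrac{1}{2}(A+A^{\ast})$ and $C=\tfrac{1}{2i}(A-A^{\ast})$ are self-adjoint, hence $B+C$ and $B-C$ are self-adjoint as well. For any self-adjoint operator $S$ one has the elementary fact $w(S)=\|S\|$. Substituting this into \eqref{2.2} immediately gives
\begin{equation*}
w_{p}(B,C)\;\geq\;2^{\frac{1}{p}-1}\max\bigl(\|B+C\|,\;\|B-C\|\bigr),
\end{equation*}
which is the first claimed bound. Notice that only \eqref{2.2} and self-adjointness are used here; the assumption $p\geq 2$ is inherited from the need to invoke \eqref{2.1} in the second part.

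For the second inequality, I would chain three ingredients. First, the identity $w(A)=w_{e}(B,C)$ observed in the introduction. Second, the left half of \eqref{2.1}, which for $p\geq 2$ asserts $w_{e}(B,C)\geq w_{p}(B,C)$. Third, the first inequality of the present corollary. Combining them yields
\begin{equation*}
w(A)\;\geq\;w_{p}(B,C)\;\geq\;2^{\frac{1}{p}-1}\max\bigl(\|B+C\|,\;\|B-C\|\bigr).
\end{equation*}

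What remains is a purely algebraic conversion from $B,C$ back to $A,A^{\ast}$. A short computation using $B=\tfrac{1}{2}(A+A^{\ast})$ and $C=\tfrac{-i}{2}(A-A^{\ast})$ gives
\begin{equation*}
B+C=\tfrac{1}{2}\bigl((1-i)A+(1+i)A^{\ast}\bigr),\qquad B-C=\tfrac{1}{2}\bigl((1+i)A+(1-i)A^{\ast}\bigr).
\end{equation*}
Pulling the factor $\tfrac{1}{2}$ out of each norm turns the constant $2^{\frac{1}{p}-1}$ into $2^{\frac{1}{p}-2}$, and the desired inequality follows. The only potential obstacle is sign bookkeeping when expressing $B\pm C$ in terms of $A$ and $A^{\ast}$, but this is routine; no deeper analytic input is needed beyond what \eqref{2.1} and \eqref{2.2} already provide.
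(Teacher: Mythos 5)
Your proposal is correct and follows exactly the route the paper intends: the first bound is \eqref{2.2} combined with $w(S)=\|S\|$ for the self-adjoint operators $B\pm C$, and the second chains $w(A)=w_e(B,C)\geq w_p(B,C)$ (the left half of \eqref{2.1} for $p\geq 2$) with the first bound and the identities $B\pm C=\tfrac12\bigl((1\mp i)A+(1\pm i)A^{\ast}\bigr)$. You simply spell out the algebra that the paper's two-sentence proof leaves implicit.
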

\begin{proof}
Obviously by inequality \eqref{2.2} we have the first inequality. For the second we use inequality \eqref{2.1}.
\end{proof}
\begin{corollary}
If $B,C\in \mathbb{B}(\mathscr{H})$, then for $p\geq 1$
\begin{equation}\label{2.3}
w_p (B,C) \geq 2^{\frac{1}{p} -1} \max \lbrace w(B), w(C) \rbrace.
\end{equation}
In addition, if $A=B+iC$ is the Cartesian decomposition of $A$, then for $p\geq 2$
\begin{equation*}
w (A) \geq 2^{\frac{1}{p} -2} \max \left( \| A+A^* \| , \| A-A^* \| \right).
\end{equation*}
\end{corollary}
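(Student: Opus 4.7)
The plan is to derive both inequalities from the previous proposition \eqref{2.2} by exploiting the fact that $w(\cdot)$ is itself a norm. For the first inequality, I would start from $w_p(B,C) \geq 2^{1/p - 1} \max\{w(B+C), w(B-C)\}$ and observe that since $2B = (B+C) + (B-C)$, the triangle inequality for the norm $w(\cdot)$ gives $2w(B) \leq w(B+C) + w(B-C) \leq 2\max\{w(B+C), w(B-C)\}$; the analogous bound for $w(C)$ comes from $2C = (B+C) - (B-C)$. Taking the maximum of the two resulting estimates yields
\begin{equation*}
\max\{w(B), w(C)\} \leq \max\{w(B+C), w(B-C)\},
\end{equation*}
and chaining this with \eqref{2.2} gives \eqref{2.3}.

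For the second assertion, I would apply the first inequality to the self-adjoint operators $B=(A+A^{*})/2$ and $C=(A-A^{*})/(2i)$ arising from the Cartesian decomposition of $A$. Since a self-adjoint operator $S$ satisfies $w(S)=\|S\|$, and since the norm is insensitive to multiplication by unimodular scalars, one gets $w(B)=\tfrac12\|A+A^{*}\|$ and $w(C)=\tfrac12\|A-A^{*}\|$. Recalling the identity $w_e(B,C)=w(A)$ noted in the introduction, together with the bound $w_p(B,C)\leq w_e(B,C)$ valid for $p\geq 2$ from \eqref{2.1}, I would conclude
\begin{equation*}
w(A) \;=\; w_e(B,C) \;\geq\; w_p(B,C) \;\geq\; 2^{\frac{1}{p}-1}\max\{w(B),w(C)\} \;=\; 2^{\frac{1}{p}-2}\max\{\|A+A^{*}\|,\|A-A^{*}\|\}.
\end{equation*}

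I do not expect any substantive obstacle here, as both parts reduce to previously established inequalities; the only care needed is keeping track of the factor $\tfrac12$ produced by the $1/(2i)$ in the formula for $C$. The conceptual content is the elementary remark that for \emph{any} seminorm $\|\cdot\|$ on a vector space, $\max\{\|x+y\|,\|x-y\|\}$ dominates $\max\{\|x\|,\|y\|\}$, which is precisely what allows the auxiliary operators $B\pm C$ appearing in the preceding proposition to be replaced by $B$ and $C$ separately.
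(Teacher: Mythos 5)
Your argument is correct and follows essentially the same route as the paper: the first bound comes from inequality \eqref{2.2} combined with the triangle inequality for $w$ applied to $2B=(B+C)+(B-C)$ and $2C=(B+C)-(B-C)$ (the paper sums the two estimates rather than passing to the maximum, but the content is identical), and the second follows from \eqref{2.1} via $w(A)=w_e(B,C)\geq w_p(B,C)$ for $p\geq 2$ together with $w(B)=\tfrac12\|A+A^*\|$ and $w(C)=\tfrac12\|A-A^*\|$ for the self-adjoint parts. No gaps; the bookkeeping of the factor $\tfrac12$ is handled correctly.
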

\begin{proof}
By inequality \eqref{2.2} and properties of the numerical radius, we have
\begin{align*}
2w_p (B,C) \geq 2^{\frac{1}{p} -1} (w(B+C) + w(B-C))\geq 2^{\frac{1}{p} -1} w (B+C +B-C)\,.
\end{align*}
So
\[w_p (B,C) \geq 2^{\frac{1}{p} -1} w(B)\,.\]
By symmetry we conclude that
\[w_p (B,C) \geq 2^{\frac{1}{p} -1} \max (w(B) , w(C)). \]
While the second inequality follows easily from inequality \eqref{2.1}.
\end{proof}
Now we apply part (b) of Lemma \ref{le3} to find some lower and upper bounds for $w_p\,\,( p>1)$.

\begin{proposition} \label{pro2.10}
Let $B, C\in \mathbb{B}(\mathscr{H})$. Then for all $p\geq 2$,\\
\noindent (i) $2^{\frac1p-1}w_p(B+C,B-C)\leq w_p(B,C)\leq 2^{-\frac1p}w_p(B+C,B-C)$;\\
\noindent (ii) $2^{\frac1p-1}\big(w^p(B+C)+w^p(B-C)\big)^\frac 1p\leq w_p(B,C)\leq 2^{-\frac1p}\big(w^p(B+C)+w^p(B-C)\big)^\frac 1p$.\\
If $1<p\leq 2$ these inequalities hold in the opposite direction.
\end{proposition}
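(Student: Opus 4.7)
The plan is to deduce both assertions from Lemma~\ref{le3}(b) applied in the one-dimensional normed space $(\mathbb{C}, |\cdot|)$. For each unit vector $u \in \mathscr{H}$, I set $\alpha := \langle Bu, u \rangle$ and $\beta := \langle Cu, u \rangle$. Since $\alpha + \beta = \langle (B+C)u, u \rangle$ and $\alpha - \beta = \langle (B-C)u, u \rangle$, Lemma~\ref{le3}(b) yields, for every $p \geq 2$, the pointwise sandwich
\begin{equation*}
2\bigl(|\langle Bu, u\rangle|^p + |\langle Cu, u\rangle|^p\bigr) \leq |\langle (B+C)u, u\rangle|^p + |\langle (B-C)u, u\rangle|^p \leq 2^{p-1}\bigl(|\langle Bu, u\rangle|^p + |\langle Cu, u\rangle|^p\bigr).
\end{equation*}

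For (i), I take the supremum over unit vectors $u$ of every side; since the supremum of a $p$-th power equals the $p$-th power of the corresponding $w_p$, this reduces at once to
$2\, w_p^p(B,C) \leq w_p^p(B+C, B-C) \leq 2^{p-1}\, w_p^p(B,C)$,
and extracting $p$-th roots with mild rearrangement delivers (i). For (ii), I combine (i) with the elementary two-sided estimate
\begin{equation*}
\tfrac{1}{2}\bigl(w^p(B+C) + w^p(B-C)\bigr) \leq w_p^p(B+C, B-C) \leq w^p(B+C) + w^p(B-C),
\end{equation*}
whose right inequality is the standard $\sup_u(f(u)+g(u)) \leq \sup_u f(u) + \sup_u g(u)$ and whose left inequality follows from $w_p^p(B+C, B-C) \geq \max\{w^p(B+C), w^p(B-C)\} \geq \tfrac{1}{2}(w^p(B+C)+w^p(B-C))$. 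Raising to $1/p$ and feeding the resulting two-sided bound for $w_p(B+C,B-C)$ into (i) produces (ii). Finally, the case $1 < p \leq 2$ is handled by observing that Lemma~\ref{le3}(b) reverses in that range, so the entire chain goes through verbatim with every inequality flipped.

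The main technical care is in keeping the directions of inequality straight as one passes through the chain and in extracting the correct power of $2$ after taking $p$-th roots; everything else is bookkeeping on elementary inequalities, and the extremal cases $B = C$ and $C = 0$ serve as useful sanity checks on the constants.
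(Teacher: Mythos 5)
Your handling of (i) and of the right half of (ii) is correct and is essentially the paper's own route: apply Clarkson's inequality (Lemma~\ref{le3}(b)) in $(\mathbb{C},|\cdot|)$ to the scalars $\langle Bu,u\rangle$, $\langle Cu,u\rangle$ and take suprema. You are in fact slightly more careful than the paper, which substitutes the moduli $|\langle Bu,u\rangle|$, $|\langle Cu,u\rangle|$, for which $a+b$ is no longer $\langle (B+C)u,u\rangle$.

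The gap is in the left half of (ii). Feeding $w_p^p(B+C,B-C)\geq\tfrac12\bigl(w^p(B+C)+w^p(B-C)\bigr)$ into the left inequality of (i) gives
\begin{equation*}
w_p(B,C)\;\geq\;2^{\frac1p-1}\cdot 2^{-\frac1p}\bigl(w^p(B+C)+w^p(B-C)\bigr)^{\frac1p}\;=\;\tfrac12\bigl(w^p(B+C)+w^p(B-C)\bigr)^{\frac1p},
\end{equation*}
and since $\tfrac12<2^{\frac1p-1}$ for every finite $p>1$, your chain delivers a strictly weaker constant than the one claimed; the phrase ``produces (ii)'' is exactly where the argument breaks. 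Moreover this cannot be repaired, because the stated left inequality of (ii) is false: take $p=2$, $B=\left(\begin{smallmatrix}0&1\\0&0\end{smallmatrix}\right)$ and $C=\left(\begin{smallmatrix}0&0\\1&0\end{smallmatrix}\right)$. Then $w(B+C)=w(B-C)=1$, while $|\langle Bx,x\rangle|^2+|\langle Cx,x\rangle|^2=2|x_1|^2|x_2|^2$ gives $w_2(B,C)=1/\sqrt2$, so the claimed lower bound $2^{-\frac12}(1+1)^{\frac12}=1$ exceeds $w_2(B,C)$. (The paper's proof shares this defect: it establishes only the pointwise sandwich and then asserts ``we obtain the desired inequalities,'' which justifies (i) and the right half of (ii) but not the left half, since $w^p(B+C)+w^p(B-C)$ can strictly exceed $w_p^p(B+C,B-C)$ when the two suprema are attained at different vectors.) The constant $\tfrac12$ that your chain actually yields is attained in the example above, so it is the correct, best-possible constant for the left half of (ii).
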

\begin{proof}
Let $x\in\mathscr{H}$ be a unit vector. Part (b) of Lemma \ref{le3} implies that for any $p\geq 2$
\[2^{1-p}(|a+b|^p+|a-b|^p)\leq |a|^p+|b|^p\leq \frac12(|a+b|^p+|a-b|^p)\,.\]
Replacing $a=|\langle Bx,x\rangle|$ and $b=|\langle Cx,x\rangle|$ in above inequalities we obtain the desired inequalities.
\end{proof}
\begin{remark}
In inequality \eqref{2.3}, if we take $B+C$ and $B-C$ instead of $B$ and $C$, then for
$p\geq1$
\[
w_p(B+C, B-C)\geq 2^{\frac1p-1}\max \lbrace w(B+C), w(B-C) \rbrace\,.
\]
By employing the first inequality of part (i) of Proposition \ref{pro2.10}, we get
\[
w_p(B, C)\geq 2^{\frac2p-2}\max \lbrace w(B+C), w(B-C) \rbrace
\]
for $p\geq1$.\\
Taking $B+C$ and $B-C$ instead of $B$ and $C$ in the second inequality of part (ii) of Proposition \ref{pro2.10}, we reach
\[
w_p(B+C, B-C)\leq 2^{1-\frac1p}\left(w^p(B)+w^p(C)\right)^\frac1p\,.
\]
for all $p\geq1$.\\
Now by applying the second inequality of part (i) of Proposition \ref{pro2.10}, we infer for $p\geq1$ that
 \[
 w_p(B,C)\leq2^{1-\frac2p}\left(w^p(B)+w^p(C)\right)^\frac1p.
 \]
 So
 \[
 2^{\frac2p-2}\max \lbrace w(B+C), w(B-C) \rbrace\leq w_p(B,C)\leq2^{1-\frac2p}\left(w^p(B)+w^p(C)\right)^\frac1p.
 \]
 Moreover if $B$ and $C$ are self-adjoint, then
 \[
 2^{\frac2p-2}\max \lbrace \|B+C\|, \|B-C\| \rbrace\leq w_p(B,C)\leq2^{1-\frac2p}\left(\|B\|^p+\|C\|^p\right)^\frac1p
 \]
 for all $p\geq1.$
 \end{remark}
In the following result we find another lower bound for $w_p\,\,(p\geq 1)$.
\begin{theorem}\label{pro2.12}
Let $B, C\in \mathbb{B}(\mathscr{H})$. Then for $p\geq 1$
\[
 w_p ( B, C)\geq 2 ^{\frac{1}{p}-1} w^{ \frac{ 1}{ 2}}( B ^ 2 + C^ 2) .
\]
\end{theorem}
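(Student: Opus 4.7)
The target inequality has the form $w_p(B,C) \geq 2^{1/p-1}\sqrt{w(B^2+C^2)}$, and the factor $2^{1/p-1}$ on the right matches exactly the one appearing in inequality \eqref{2.2}. That suggests the plan is to start from \eqref{2.2} and then bound $\max\bigl(w(B+C),w(B-C)\bigr)$ from below by $w(B^2+C^2)^{1/2}$ using only standard norm-theoretic properties of the numerical radius.

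The key algebraic observation I would use is the parallelogram-type identity
\[
(B+C)^{2}+(B-C)^{2}=2(B^{2}+C^{2}),
\]
which will let me reach $B^2+C^2$ from squares of $B\pm C$. The plan is then to chain the following four elementary facts, in order:
(i) inequality \eqref{2.2}, giving $w_p(B,C)\geq 2^{1/p-1}\max\bigl(w(B+C),w(B-C)\bigr)$;
(ii) the numerical inequality $\max(a,b)^{2}\geq\tfrac{1}{2}(a^{2}+b^{2})$ applied with $a=w(B+C)$, $b=w(B-C)$;
(iii) the numerical radius power inequality $w(T)^{2}\geq w(T^{2})$, used for $T=B+C$ and $T=B-C$; and
(iv) subadditivity of $w(\cdot)$ (it is a norm on $\mathbb{B}(\mathscr{H})$), giving $w\bigl((B+C)^{2}\bigr)+w\bigl((B-C)^{2}\bigr)\geq w\bigl((B+C)^{2}+(B-C)^{2}\bigr)=2w(B^{2}+C^{2})$.

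Putting these together yields
\[
w_p(B,C)\geq 2^{\frac1p-1}\sqrt{\frac{w((B+C)^2)+w((B-C)^2)}{2}}\geq 2^{\frac1p-1}\sqrt{\frac{2w(B^{2}+C^{2})}{2}},
\]
which collapses to the desired bound.

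I do not expect any serious obstacle here; the whole argument is a short chain of standard one-line estimates, and the only minor point requiring care is to apply the power inequality to $B+C$ and $B-C$ \emph{before} invoking the triangle inequality, since doing the reverse would produce $w\bigl((B+C+B-C)^2\bigr)=w(4B^{2})$ and lose all information about $C$. The arithmetic-mean bound $\max\geq$ root-mean-square is what lets step (ii) pass cleanly from a max to a sum so that the triangle inequality can act. Once this ordering is fixed, the identification $(B+C)^{2}+(B-C)^{2}=2(B^{2}+C^{2})$ cancels the factor of $2$ precisely, leaving the constant $2^{1/p-1}$ intact.
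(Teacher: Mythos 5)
Your proposal is correct and follows essentially the same route as the paper: the paper likewise starts from inequality \eqref{2.2}, adds the two bounds for $w^2(B+C)$ and $w^2(B-C)$ (which is just your step (ii) in additive form), applies the power inequality $w(T^2)\leq w^2(T)$ to each, then uses subadditivity of $w$ together with the identity $(B+C)^2+(B-C)^2=2(B^2+C^2)$. No meaningful difference from the published argument.
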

\begin{proof}
It follows from \eqref{2.2} that
\begin{align*}
 2 ^{ \frac{2}{p} -2} w ^2 ( B \pm C) \leq w ^2_p(B, C)\,.
\end{align*}
Hence
\begin{align*}
2 w ^ 2_p(B, C) &\geq 2 ^{ \frac{2}{p} -2} \big[ w ^2 (B + C) + w^ 2 (B- C) \big] \\
& \geq 2 ^{ \frac{2}{p} -2} \big[ w \left( ( B+ C) ^ 2 \right) + w \left( ( B - C ) ^ 2 \right) \big] \\
& \geq 2 ^{ \frac{2}{p} -2} \big[ w \left( ( B+ C ) ^ 2 + ( B - C) ^ 2 \right) \big] = 2 ^{ \frac{2}{p} -1} w ( B^2 + C^2)\,.
\end{align*}
It follows that
\[w_p(B, C) \geq 2 ^{ \frac{1}{p} -1} w ^{ \frac{ 1}{ 2} }( B ^2+ C^2).\]
\end{proof}
\begin{corollary}
If $A=B+iC $ is the Cartesian decomposition of $A$ , then
\[
 w_p(B, C)\geq 2 ^{ \frac{1}{p} -1} \| B^2 + C^2 \| ^{ \frac{ 1}{ 2}}.
\]
And
\[
 w (A)\geq 2 ^{ \frac{1}{p} -\frac32} \| A^*A+AA^* \| ^{ \frac{ 1}{ 2}}.
\]
for any $p\geq 2$.
\end{corollary}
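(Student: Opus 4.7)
The plan is to derive both inequalities directly from Theorem \ref{pro2.12} together with two standard observations about the Cartesian decomposition: first that $B$ and $C$ are self-adjoint so the operator $B^2+C^2$ is positive (hence its numerical radius equals its operator norm), and second the identity $A^*A+AA^*=2(B^2+C^2)$ already recorded in the introduction.

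\textbf{Step 1 (first inequality).} Apply Theorem \ref{pro2.12} to $B$ and $C$ to obtain
\[
w_p(B,C)\geq 2^{\frac{1}{p}-1}\,w^{\frac{1}{2}}(B^2+C^2).
\]
Since $B$ and $C$ are self-adjoint, $B^2$ and $C^2$ are positive, and so is $B^2+C^2$. For a positive (hence self-adjoint) operator the numerical radius coincides with the operator norm, yielding $w(B^2+C^2)=\|B^2+C^2\|$. Substituting this gives the first claim.

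\textbf{Step 2 (second inequality).} I would chain the first inequality with the comparison \eqref{2.1}. Indeed, for $p\geq 2$ the left half of \eqref{2.1} gives $w_p(B,C)\leq w_e(B,C)$, while the computation of $w_e^2(B,C)=w^2(A)$ recorded in the introduction identifies $w_e(B,C)=w(A)$. Hence
\[
w(A)\geq w_p(B,C)\geq 2^{\frac{1}{p}-1}\|B^2+C^2\|^{\frac{1}{2}}.
\]
Finally, the identity $A^*A+AA^*=2(B^2+C^2)$ gives $\|B^2+C^2\|=\tfrac12\|A^*A+AA^*\|$, and plugging this in produces the extra factor $2^{-1/2}$, so the exponent becomes $\tfrac{1}{p}-\tfrac{3}{2}$, which is exactly the stated bound.

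\textbf{Expected difficulties.} There is essentially no obstacle here: the result is a direct corollary of Theorem \ref{pro2.12}, the comparison \eqref{2.1}, and the elementary Cartesian-decomposition identity. The only points to be a little careful about are (a) justifying the passage from $w(B^2+C^2)$ to $\|B^2+C^2\|$ via positivity of $B^2+C^2$, and (b) making sure the exponents on $2$ are tracked correctly when combining $2^{\frac{1}{p}-1}$ from Theorem \ref{pro2.12} with the factor $2^{-1/2}$ arising from the identity $A^*A+AA^*=2(B^2+C^2)$.
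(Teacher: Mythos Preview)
Your proposal is correct and follows exactly the paper's own route: the first inequality comes from Theorem~\ref{pro2.12} together with positivity of $B^{2}+C^{2}$ (so $w(B^{2}+C^{2})=\|B^{2}+C^{2}\|$), and the second comes from combining this with the comparison \eqref{2.1} (which gives $w(A)=w_e(B,C)\geq w_p(B,C)$ for $p\geq 2$) and the identity $A^{*}A+AA^{*}=2(B^{2}+C^{2})$. Your write-up is simply more explicit than the paper's terse ``obvious / use \eqref{2.1}'' sketch.
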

\begin{proof}
The first inequality is obvious. For the second we have $A^*A+AA^*=2(B^2+C^2)$. Now by using inequality \eqref{2.1} the proof is complete.
\end{proof}
\begin{corollary}
If $B, C\in\mathbb{B}(\mathscr{H}) $, then for $p\geq2$
\[
w_p(B,C)\geq 2^{\frac2p-\frac32} w^{\frac12}\left(B^2+C^2\right).
\]
\end{corollary}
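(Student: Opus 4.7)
The plan is to chain inequality~\eqref{2.1} with Theorem~\ref{pro2.12}; together they yield the claim in two short steps, and the product of the two scalar factors they contribute is exactly $2^{\frac{2}{p}-\frac{3}{2}}$.

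For $p\geq 2$, the right half of \eqref{2.1} rearranges to
\[
w_p(B,C)\geq 2^{\frac{1}{p}-\frac{1}{2}}\,w_e(B,C),
\]
which I would use to reduce the problem to estimating $w_e(B,C)$ from below. Since $w_e(B,C)=w_2(B,C)$, Theorem~\ref{pro2.12} applied at $p=2$ gives
\[
w_e(B,C)\geq 2^{-\frac{1}{2}}\,w^{\frac{1}{2}}(B^2+C^2)\geq 2^{\frac{1}{p}-1}\,w^{\frac{1}{2}}(B^2+C^2),
\]
the second inequality using the hypothesis $p\geq 2$, which forces $2^{\frac{1}{p}-1}\leq 2^{-1/2}$. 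Substituting this $p$-adapted lower bound for $w_e(B,C)$ into the previous display produces
\[
w_p(B,C)\geq 2^{\frac{1}{p}-\frac{1}{2}}\cdot 2^{\frac{1}{p}-1}\,w^{\frac{1}{2}}(B^2+C^2)=2^{\frac{2}{p}-\frac{3}{2}}\,w^{\frac{1}{2}}(B^2+C^2),
\]
which is the stated corollary.

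The argument is essentially routine, so the principal thing to watch is the choice of intermediate bound: one must deliberately weaken the sharp coefficient $2^{-1/2}$ to the $p$-dependent $2^{\frac{1}{p}-1}$ at the $w_e$ step. Without this (benign) loss the chain would instead reproduce Theorem~\ref{pro2.12} itself, giving the stronger but differently shaped bound $w_p(B,C)\geq 2^{\frac{1}{p}-1}w^{\frac{1}{2}}(B^2+C^2)$, rather than the form $2^{\frac{2}{p}-\frac{3}{2}}$ asked for. I do not anticipate any substantive obstacle.
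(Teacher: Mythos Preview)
Your argument is correct, but it proceeds differently from the paper. The paper substitutes $B+C$ and $B-C$ for $B$ and $C$ in Theorem~\ref{pro2.12}, obtaining (via $(B+C)^2+(B-C)^2=2(B^2+C^2)$)
\[
w_p(B+C,B-C)\geq 2^{\frac{1}{p}-\frac{1}{2}}\,w^{\frac{1}{2}}(B^2+C^2),
\]
and then applies the left inequality of Proposition~\ref{pro2.10}(i), $w_p(B,C)\geq 2^{\frac{1}{p}-1}w_p(B+C,B-C)$, to reach the stated constant. Your route instead passes through $w_e$ via \eqref{2.1} and invokes Theorem~\ref{pro2.12} only at $p=2$, followed by an explicit weakening of the scalar factor. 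Both chains are valid; the paper's is more ``natural'' in that the constant $2^{\frac{2}{p}-\frac{3}{2}}$ emerges without any deliberate loss, whereas your approach has the merit of making transparent that the corollary is in fact dominated by Theorem~\ref{pro2.12} itself for all $p\geq 2$ (with equality only at $p=2$), since $2^{\frac{1}{p}-1}\geq 2^{\frac{2}{p}-\frac{3}{2}}$ there. That last observation is a worthwhile remark the paper does not make.
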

\begin{proof}
By choosing $B+C$ and $B-C$ instead of $B$ and $C$ in Theorem \ref{pro2.12} and employing part (i) of Proposition \ref{pro2.10} we conclude that the desired inequality.
\end{proof}
The following result providing other bound for $w_p\,\,(p > 1)$ may be stated as follows:
\begin{proposition}\label{pro2.15}
Let $B,C \in \mathbb{B}(\mathscr{H})$. Then
\[
w_p(B,C) \leq w _q \left( \frac{ B+C}{2} , \frac{ B-C}{2} \right).
\]
for any $p \geq 2, 1<q\leq 2$ with $\frac{1}{p} +\frac{1}{q}=1$. If $1<p\leq 2,$ the reverse inequality holds.
\end{proposition}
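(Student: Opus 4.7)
The plan is to reduce the operator inequality to a pointwise scalar inequality and then invoke Clarkson's inequality (Lemma \ref{le3}) in the one-dimensional normed space $\mathbb{C}$. Fix a unit vector $x\in\mathscr{H}$ and set $a=\langle Bx,x\rangle$, $b=\langle Cx,x\rangle$, so that $\langle \tfrac{B+C}{2}x,x\rangle=\tfrac{a+b}{2}$ and $\langle \tfrac{B-C}{2}x,x\rangle=\tfrac{a-b}{2}$. Thus it suffices to compare $(|a|^{p}+|b|^{p})^{1/p}$ with $\bigl(|\tfrac{a+b}{2}|^{q}+|\tfrac{a-b}{2}|^{q}\bigr)^{1/q}$ pointwise in $x$ and then pass to the supremum over unit vectors.

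For $p\geq 2$ with $\tfrac{1}{p}+\tfrac{1}{q}=1$, I would apply part (a) of Lemma \ref{le3} to the scalars $a,b\in\mathbb{C}$, which gives
\[
2(|a|^{p}+|b|^{p})^{q-1}\leq |a+b|^{q}+|a-b|^{q}.
\]
Then I would rewrite the right side as $2^{q}\bigl(|\tfrac{a+b}{2}|^{q}+|\tfrac{a-b}{2}|^{q}\bigr)$, raise both sides to the power $\tfrac{1}{q-1}=p-1$, and finally to $\tfrac{1}{p}$. The conjugate-exponent identity $(p-1)(q-1)=1$ (equivalently $p(q-1)=q$) is exactly what is needed for the numerical factors to line up; it yields the desired pointwise inequality. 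Taking the supremum over $\{x:\|x\|=1\}$ on both sides then produces $w_{p}(B,C)\leq w_{q}\bigl(\tfrac{B+C}{2},\tfrac{B-C}{2}\bigr)$.

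For $1<p\leq 2$, the last sentence of Lemma \ref{le3} says that Clarkson's inequality reverses. Running the above argument verbatim with the reversed scalar inequality gives the reverse bound; the passage to the supremum still survives in this direction since pointwise $F(x)\geq G(x)$ implies $\sup_{x}F\geq\sup_{x}G$.

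The main (and really only) technical step is the exponent bookkeeping: tracking the factors $2$ and $2^{q}$ through the successive raisings to reciprocal powers and confirming that $(p-1)(q-1)=1$ is precisely the relation needed for the constants to collapse. No conceptual obstacle is anticipated; the whole argument is Clarkson's inequality transported from $\mathbb{C}$ to numerical-radius-type functionals by the pointwise-then-supremum principle.
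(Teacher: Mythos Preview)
Your plan is exactly the paper's: reduce to scalars $a=\langle Bx,x\rangle$, $b=\langle Cx,x\rangle$, apply Clarkson's inequality (Lemma~\ref{le3}(a)), and take the supremum. However, the step you flag as ``only bookkeeping'' does \emph{not} close up. Starting from $2(|a|^{p}+|b|^{p})^{q-1}\le |a+b|^{q}+|a-b|^{q}$ and writing the right side as $2^{q}\bigl(|\tfrac{a+b}{2}|^{q}+|\tfrac{a-b}{2}|^{q}\bigr)$, raising to the power $\tfrac{1}{q-1}=p-1$ and then to $\tfrac1p$ gives
\[
(|a|^{p}+|b|^{p})^{1/p}\le 2^{1/p}\Bigl(\bigl|\tfrac{a+b}{2}\bigr|^{q}+\bigl|\tfrac{a-b}{2}\bigr|^{q}\Bigr)^{1/q},
\]
since $(q-1)(p-1)=1$ leaves a surviving factor $2^{(q-1)(p-1)/p}=2^{1/p}$, not $1$. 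Equivalently, the paper's own display just before \eqref{aba} simplifies to $2^{-1/q}(|a+b|^{q}+|a-b|^{q})^{1/q}$, whereas $\bigl(|\tfrac{a+b}{2}|^{q}+|\tfrac{a-b}{2}|^{q}\bigr)^{1/q}=2^{-1}(|a+b|^{q}+|a-b|^{q})^{1/q}$, and $2^{-1/q}>2^{-1}$ for $q>1$.

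This is not a cosmetic slip: the pointwise inequality you are aiming for is actually false, and so is the stated proposition. Take $B=I$, $C=0$, $p=q=2$; then $w_{2}(I,0)=1$ while $w_{2}(\tfrac I2,\tfrac I2)=\tfrac{1}{\sqrt2}$, contradicting $w_{p}(B,C)\le w_{q}(\tfrac{B+C}{2},\tfrac{B-C}{2})$. The argument (yours and the paper's) in fact proves the weaker bound $w_{p}(B,C)\le 2^{1/p}\,w_{q}(\tfrac{B+C}{2},\tfrac{B-C}{2})$ for $p\ge2$, with the reverse for $1<p\le2$; your outline is correct for that corrected statement.
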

\begin{proof}
Let $x\in \mathscr{H}$ be a unit vector.
Part (a) of Lemma \ref{le3} implies that
\[
|a|^p + |b|^p \leq 2 ^{ \frac{ 1}{ 1-q}} \left( |a+b| ^q +|a- b|^q \right) ^{ \frac{ 1}{ q-1}}\,.
\]
So
\[
(|a| ^p + | b| ^p) ^{\frac{1}{p}} \leq 2 ^{ \frac{1}{p(1-q)}}\left( |a + b | ^q + | a-b| ^q \right) ^{ \frac{ 1}{ p(q-1)}}\,.
\]
Now replacing $a= \langle Bx, x \rangle $ and $b=\langle Cx,x \rangle$ in the above inequality we conclude that
{\small\begin{equation}\label{aba}
\left( | \langle Bx, x \rangle |^p +| \langle Cx,x\rangle |^p \right)^{\frac{1}{p}}\leq \left( \left| \left\langle \left( \frac{B+C}{2} \right) x, x \right\rangle \right| ^q + \left| \left\langle \left( \frac{B-C}{2}\right) x,x \right\rangle \right| ^q \right)^{ \frac{ 1}{q}}.
\end{equation}}
By taking supremum over $x\in \mathscr{H}$ with $\|x \| =1$ we deduce that
\[
w_p(B,C) \leq w _q \left( \frac{ B+C}{2} , \frac{ B-C}{2} \right)
\]
for any $p \geq 2, 1<q\leq 2$ with $\frac{1}{p} +\frac{1}{q}=1$.
\end{proof}
\begin{corollary}
Inequality \eqref{aba} implies that
\[
w_p(B,C) \leq \left( w ^q \left( \frac{ B+C}{2} \right) + w^q \left( \frac{ B-C}{2}\right) \right) ^{ \frac{1}{q}}.
\]
for any $1<q\leq 2, p \geq 2$ with $\frac{1}{p} +\frac{1}{q}=1$. Further, if $B$and $C$ are self-adjoint, then
\[
w_p(B,C) \leq \frac 12\left( \|B+C\|^q + \|B-C\|^q \right) ^{ \frac{1}{q}}.
\]

If $1<p\leq 2,$ the converse inequalities hold.
\end{corollary}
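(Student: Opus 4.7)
The plan is to deduce both stated inequalities directly from inequality \eqref{aba}, using only the elementary pointwise bound $|\langle Ax,x\rangle|\le w(A)$ together with the identity $w(A)=\|A\|$ for self-adjoint $A$.

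First I would fix a unit vector $x\in\mathscr{H}$ and apply \eqref{aba} to $B$ and $C$ with exponents $p\ge 2$ and $q=p/(p-1)\in(1,2]$. The right-hand side of \eqref{aba} involves $|\langle\tfrac{B+C}{2}x,x\rangle|$ and $|\langle\tfrac{B-C}{2}x,x\rangle|$, each of which is bounded above by $w(\tfrac{B+C}{2})$ and $w(\tfrac{B-C}{2})$ respectively. Since $t\mapsto t^{1/q}$ is monotone, this yields
\[
\bigl(|\langle Bx,x\rangle|^{p}+|\langle Cx,x\rangle|^{p}\bigr)^{1/p}\le \Bigl(w^{q}\bigl(\tfrac{B+C}{2}\bigr)+w^{q}\bigl(\tfrac{B-C}{2}\bigr)\Bigr)^{1/q}.
\]
The right-hand side is independent of $x$, so taking the supremum over unit vectors on the left gives the first claimed inequality.

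For the second inequality, I would invoke the self-adjointness hypothesis: $B+C$ and $B-C$ are then self-adjoint, so $w\bigl(\tfrac{B\pm C}{2}\bigr)=\tfrac12\|B\pm C\|$. Substituting these values into the first inequality and factoring $2^{-q}$ out from each summand under the $q$-th root produces the prefactor $\tfrac12$ and yields the bound in terms of operator norms.

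Finally, for the regime $1<p\le 2$, the Clarkson-type inequality \eqref{aba} reverses (by the second clause of Lemma~\ref{le3}), and the same chain of estimates, read in the opposite direction, delivers the converse inequalities. The only subtlety to watch for is the correct pairing of $p$ and its conjugate $q$ as one crosses the threshold $p=2$, together with keeping track of the direction of the Clarkson-type estimate; no genuine obstacle is expected, since the argument reduces to \eqref{aba} plus a one-line norm estimate.
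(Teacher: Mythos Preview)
Your argument for the case $p\ge 2$ is correct and is exactly the (implicit) route the paper intends: bound each term on the right of \eqref{aba} by the corresponding numerical radius, then take the supremum on the left. The self-adjoint specialization is also handled correctly.

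However, your treatment of the converse range $1<p\le 2$ contains a genuine gap. When \eqref{aba} is reversed you obtain, for each unit vector $x$,
\[
\bigl(|\langle Bx,x\rangle|^{p}+|\langle Cx,x\rangle|^{p}\bigr)^{1/p}\ \ge\ \Bigl(\bigl|\langle\tfrac{B+C}{2}x,x\rangle\bigr|^{q}+\bigl|\langle\tfrac{B-C}{2}x,x\rangle\bigr|^{q}\Bigr)^{1/q}.
\]
Taking the supremum over $x$ on both sides yields $w_p(B,C)\ge w_q\bigl(\tfrac{B+C}{2},\tfrac{B-C}{2}\bigr)$, which is the reversed Proposition~\ref{pro2.15}. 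But this is \emph{not} the converse of the corollary: you still need to pass from $w_q\bigl(\tfrac{B+C}{2},\tfrac{B-C}{2}\bigr)$ to $\bigl(w^{q}(\tfrac{B+C}{2})+w^{q}(\tfrac{B-C}{2})\bigr)^{1/q}$, and in general the inequality between these two quantities goes the wrong way, since $\sup_x(a(x)^q+b(x)^q)\le (\sup_x a(x))^q+(\sup_x b(x))^q$. The pointwise bound $|\langle Ax,x\rangle|\le w(A)$ that you used in the forward direction cannot be ``read in the opposite direction''; there is no lower bound of the form $|\langle Ax,x\rangle|\ge w(A)$ valid for all $x$. So the chain you describe does not deliver the converse as written, and some additional argument (or a weakening of the converse claim to the $w_q$ form of Proposition~\ref{pro2.15}) is needed. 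The paper itself offers no proof of this last assertion, so the difficulty is not resolved there either.
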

\begin{corollary}
If $B,C \in \mathbb{B}(\mathscr{H})$, then
$$w_q\left(\frac{B+C}{2},\frac{B-C}{2}\right)\leq 2^{\frac1p} w_p\left(\frac{B+C}{2},\frac{B-C}{2}\right).$$
for all $1<p \leq2$ with $\frac1p+\frac1q=1$. If $p\geq2$, the above inequality is valid in the opposite direction.
\end{corollary}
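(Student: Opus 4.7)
The plan is to set $X=\frac{B+C}{2}$ and $Y=\frac{B-C}{2}$, so that the claim becomes $w_q(X,Y)\leq 2^{1/p}\,w_p(X,Y)$ (with the reverse for $p\geq 2$). The strategy is to pass through the auxiliary quantity $w_q(B,C)=w_q(X+Y,X-Y)$, bounding it from one side by Proposition \ref{pro2.15} and from the other by Lemma \ref{le3}(b).

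First, I would apply Proposition \ref{pro2.15} not to $(B,C)$ but to the pair $(X,Y)$ itself. Since $\frac{X+Y}{2}=\frac{B}{2}$ and $\frac{X-Y}{2}=\frac{C}{2}$, the absolute homogeneity of $w_q$ turns the resulting inequality into a comparison between $w_p(X,Y)$ and $\frac{1}{2}w_q(B,C)$. Concretely, for $1<p\leq 2$ the reverse form of Proposition \ref{pro2.15} yields $w_q(B,C)\leq 2\,w_p(X,Y)$, while for $p\geq 2$ the direct form gives $w_q(B,C)\geq 2\,w_p(X,Y)$.

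Next, I would relate $w_q(B,C)=w_q(X+Y,X-Y)$ back to $w_q(X,Y)$ by invoking Lemma \ref{le3}(b) pointwise with the scalars $a=\langle Xx,x\rangle$ and $b=\langle Yx,x\rangle$. For the main case $1<p\leq 2$ we have $q\geq 2$, so the left inequality of Lemma \ref{le3}(b) reads $2(|a|^q+|b|^q)\leq |a+b|^q+|a-b|^q$; taking $1/q$-th powers and suprema over unit vectors produces $2^{1/q}w_q(X,Y)\leq w_q(B,C)$. For $p\geq 2$ we instead have $1<q\leq 2$, and the converse half of Lemma \ref{le3}(b) delivers $w_q(B,C)\leq 2^{1/q}w_q(X,Y)$.

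Chaining the two estimates finishes the proof: for $1<p\leq 2$ one has $2^{1/q}w_q(X,Y)\leq w_q(B,C)\leq 2\,w_p(X,Y)$, hence $w_q(X,Y)\leq 2^{1-1/q}w_p(X,Y)=2^{1/p}w_p(X,Y)$; for $p\geq 2$ the chain reverses and yields $w_q(X,Y)\geq 2^{1/p}w_p(X,Y)$. The only delicate point is bookkeeping the direction of both Proposition \ref{pro2.15} and Lemma \ref{le3}(b): both switch at $p=2$ (equivalently $q=2$), and the fact that they switch \emph{simultaneously} is precisely what lets the two constants recombine cleanly into $2^{1-1/q}=2^{1/p}$.
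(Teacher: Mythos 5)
Your proof is correct and rests on the same two ingredients as the paper's, namely Proposition \ref{pro2.15} together with the Clarkson-type estimate of Lemma \ref{le3}(b) that underlies Proposition \ref{pro2.10}(i); the only difference is that you apply them to the pair $\left(\frac{B+C}{2},\frac{B-C}{2}\right)$ with exponent $q$ and chain through the intermediate quantity $w_q(B,C)$, whereas the paper applies them to $(B,C)$ with exponent $p$ and chains through $w_p(B,C)$. Either way the constants recombine to $2^{1/p}=2^{1-1/q}$, and your bookkeeping of the simultaneous direction reversals at $p=2$ is accurate.
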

\begin{proof}
By Proposition \ref{pro2.15} we have
$$w_q\left(\frac{B+C}{2},\frac{B-C}{2}\right)\leq w_p(B,C).$$
for all $1<p\leq2$ with $\frac1p+\frac1q=1.$
Proposition \ref{pro2.10} follows that
$$w_p(B,C)\leq 2^{\frac1p-1} w_p(B+C,B-C)= 2^{\frac1p} w_p\left(\frac{B+C}{2},\frac{B-C}{2}\right)\,.$$
We therefore get the desired inequality.
\end{proof}

\section{Inequalities of $w_p$ for $n$-tuples of operators}
\label{sec2}
In this section, we are going to obtain some numerical radius inequalities for $n$-tuples of operators. Some generalization of inequalities in the previous section are also established. According to the definition of numerical radius, we immediately get the following double inequality for $p\geq 1$
\begin{equation*}
w_{p}\left( T_{1},\ldots ,T_{n}\right) \leq \left( \underset{i=1}{\overset{n}%
{\sum }}w^{p}\left( T_{i}\right) \right) ^{\frac{1}{p}}\leq \underset{i=1}{\overset{n}{\sum }%
}w\left( T_{i}\right).
\end{equation*}
An application of Holder's inequality gives the next result, which is a generalization of inequality \eqref{2.2}.
\begin{theorem}
Let $\left( T_{1},\ldots ,T_{n}\right) \in \mathbb{B}(\mathscr{H}) ^{(n)}$ and $0\leq \alpha _{i}\leq 1$, $i=1,\ldots n,$ with $\underset{%
i=1}{\overset{n}{\sum }}\alpha _{i}=1.$ Then%
\begin{equation*}
w_{p}\left( T_{1},\ldots ,T_{n}\right) \geq w\left( \alpha _{1}^{1-\frac{1}{p%
}}T_{1}\pm \alpha _{2}^{1-\frac{1}{p}}T_{2}\pm \ldots \pm \alpha _{n}^{1-%
\frac{1}{p}}T_{n}\right)
\end{equation*}
for any $p> 1$.
\end{theorem}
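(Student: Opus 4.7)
The strategy is to fix an arbitrary unit vector $x\in\mathscr{H}$ and pass the inequality to the scalar level, where it becomes a direct consequence of Hölder's inequality. For any choice of signs $\varepsilon_i\in\{+1,-1\}$, linearity of the inner product gives
\begin{equation*}
\left|\Big\langle\Big(\sum_{i=1}^{n}\varepsilon_i\alpha_i^{1-\frac{1}{p}}T_i\Big)x,x\Big\rangle\right|
=\left|\sum_{i=1}^{n}\varepsilon_i\alpha_i^{1-\frac{1}{p}}\langle T_ix,x\rangle\right|
\leq \sum_{i=1}^{n}\alpha_i^{1-\frac{1}{p}}|\langle T_ix,x\rangle|,
\end{equation*}
so the signs disappear after the triangle inequality and it suffices to bound the last sum.

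Next I would apply Hölder's inequality with conjugate exponents $p$ and $q=\frac{p}{p-1}$ (so that $\frac{1}{p}+\frac{1}{q}=1$), pairing the weights $\alpha_i^{1-\frac{1}{p}}$ with the numbers $|\langle T_ix,x\rangle|$. The key arithmetic is
\begin{equation*}
\Big(1-\tfrac{1}{p}\Big)q=\frac{q}{q}=1,
\end{equation*}
which makes the weight side collapse: $\sum_{i=1}^{n}\bigl(\alpha_i^{1-\frac{1}{p}}\bigr)^q=\sum_{i=1}^{n}\alpha_i=1$. Consequently
\begin{equation*}
\sum_{i=1}^{n}\alpha_i^{1-\frac{1}{p}}|\langle T_ix,x\rangle|
\leq\Big(\sum_{i=1}^{n}\alpha_i\Big)^{\!\frac{1}{q}}\Big(\sum_{i=1}^{n}|\langle T_ix,x\rangle|^{p}\Big)^{\!\frac{1}{p}}
=\Big(\sum_{i=1}^{n}|\langle T_ix,x\rangle|^{p}\Big)^{\!\frac{1}{p}}.
\end{equation*}

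Combining the two displays gives, for every unit vector $x$ and every sign pattern,
\begin{equation*}
\left|\Big\langle\Big(\sum_{i=1}^{n}\varepsilon_i\alpha_i^{1-\frac{1}{p}}T_i\Big)x,x\Big\rangle\right|
\leq \Big(\sum_{i=1}^{n}|\langle T_ix,x\rangle|^{p}\Big)^{\!\frac{1}{p}}.
\end{equation*}
Taking the supremum over all $x\in\mathscr{H}$ with $\|x\|=1$ yields $w\bigl(\sum_i\varepsilon_i\alpha_i^{1-1/p}T_i\bigr)\le w_p(T_1,\ldots,T_n)$, which is exactly the asserted inequality. The only point requiring care is the exponent computation $(1-\frac{1}{p})q=1$; everything else is triangle inequality, Hölder, and taking a sup, so I do not expect a real obstacle. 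Note that $p=1$ must be excluded because then $q=\infty$ and $\alpha_i^{1-1/p}=1$, so the statement degenerates into $w_1(T_1,\ldots,T_n)\ge w(\sum\varepsilon_i T_i)$, which is only the obvious triangle bound and does not use the weights $\alpha_i$.
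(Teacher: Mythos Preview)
Your proof is correct and follows essentially the same route as the paper: fix a unit vector, apply the triangle inequality to remove the signs, and then use H\"older's inequality with the observation that $(1-\tfrac{1}{p})q=1$ collapses $\sum_i(\alpha_i^{1-1/p})^q$ to $\sum_i\alpha_i=1$, before taking the supremum. The paper merely reverses the order of the two steps (H\"older first, then the triangle inequality), but the argument is the same.
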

\begin{proof}
In the Euclidean space $\mathbb{R}^{n}$ with the standard inner product, Holder's inequality
\begin{equation*}
\underset{i=1}{\overset{n}{\sum }}\left\vert x_{i}y_{i}\right\vert \leq
\left( \underset{i=1}{\overset{n}{\sum }}\left\vert x_{i}\right\vert
^{p}\right) ^{\frac{1}{p}}\left( \underset{i=1}{\overset{n}{\sum }}%
\left\vert y_{i}\right\vert ^{q}\right) ^{\frac{1}{q}}
\end{equation*}%
holds, where $p$ and $q$ are in the open interval $(1,\infty )$ with $\frac{1}{p}+%
\frac{1}{q}=1$ and $\left( x_{1},\ldots ,x_{n}\right)$, $\left( y_{1},\ldots
,y_{n}\right) \in \mathbb{R}^{n}$. For $\left( y_{1},\ldots ,y_{n}\right) =\left( \alpha _{1}^{1-\frac{1%
}{p}},\ldots ,\alpha _{n}^{1-\frac{1}{p}}\right) $ we have%
\begin{equation*}
\underset{i=1}{\overset{n}{\sum }}\left\vert \alpha _{i}^{1-\frac{1}{p}%
}x_{i}\right\vert \leq \left( \underset{i=1}{\overset{n}{\sum }}\left\vert
x_{i}\right\vert ^{p}\right) ^{\frac{1}{p}}\left( \underset{i=1}{\overset{n}{%
\sum }}\left\vert \alpha _{i}^{1-\frac{1}{p}}\right\vert ^{q}\right) ^{\frac{%
1}{q}}.
\end{equation*}%
Thus
\begin{equation*}
\left( \underset{i=1}{\overset{n}{\sum }}\left\vert x_{i}\right\vert
^{p}\right) ^{\frac{1}{p}}\geq \underset{i=1}{\overset{n}{\sum }}\left\vert
\alpha _{i}^{1-\frac{1}{p}}x_{i}\right\vert .
\end{equation*}%
Choosing $x_{i}=\left\vert \left\langle T_{i}x,x\right\rangle \right\vert
,i=1,\ldots n$, we get
\begin{align*}
&\hspace{-1cm}\left( \underset{i=1}{\overset{n}{\sum }}\left\vert \left\langle
T_{i}x,x\right\rangle \right\vert ^{p}\right) ^{\frac{1}{p}}\\
 &\geq \underset
{i=1}{\overset{n}{\sum }}\left\vert \left\langle \alpha _{i}^{1-\frac{1}{p}
}T_{i}x,x\right\rangle \right\vert \\
&\geq \left\vert \left\langle \alpha _{1}^{1-\frac{1}{p}}T_{1}x,x\right
\rangle \pm \left\langle \alpha _{2}^{1-\frac{1}{p}}T_{2}x,x\right\rangle
\pm \ldots \pm \left\langle \alpha _{n}^{1-\frac{1}{p}}T_{n}x,x\right\rangle
\right\vert \\
&=\left\vert \left\langle \left( \alpha _{1}^{1-\frac{1}{p}}T_{1}\pm \alpha
_{2}^{1-\frac{1}{p}}T_{2}\pm \ldots \pm \alpha _{n}^{1-\frac{1}{p}}
T_{n}\right) x,x\right\rangle \right\vert .
\end{align*}
Now the result follows by taking the supremum over all unit vectors in $\mathscr{H}$.
\end{proof}
Now we give another upper bound for the powers of $w_p$. This result has several inequalities as special cases, which considerably generalize the second inequality of \eqref{1.1}.
\begin{theorem}
Let $\left( T_{1},\ldots ,T_{n}\right) ,\left( A_{1},\ldots ,A_{n}\right)
,\left( B_{1},\ldots ,B_{n}\right) \in \mathbb{B}(\mathscr{H})^{(n)} $
and let $f$ and $g$ be nonnegative continuous functions on $\left[ 0,\infty \right) $
satisfying $f\left( t\right) g\left(
t\right) =t$ for all $t\in \left[ 0,\infty \right) $. Then
{\footnotesize \begin{equation*}
w_{p}^{rp}\left( A_{1}^{\ast }T_{1}B_{1},\ldots ,A_{n}^{\ast
}T_{n}B_{n}\right) \leq \frac{1}{2}\left\Vert \underset{i=1}{\overset{n}{\sum }}\Big(
\left[ B_{i}^{\ast
}f^{2}\left( \left\vert T_{i}\right\vert \right) B_{i}\right] ^{rp}+\left[
A_{i}^{\ast }g^{2}\left( \left\vert T_{i}^{\ast }\right\vert \right)
A_{i}\right] ^{rp}\Big)\right\Vert
\end{equation*}}
for $p\geq 1$ and $r\geq 1.$
\end{theorem}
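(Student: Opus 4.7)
The plan is to prove the inequality via a pointwise estimate on $|\langle A_i^{\ast}T_i B_i x, x\rangle|^{rp}$ for each unit vector $x\in\mathscr{H}$ and each $i$, and then to sum over $i$ and take the supremum over $x$.

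First, I apply the generalized mixed Cauchy--Schwarz inequality of Lemma \ref{le1}(c) to $\langle A_i^{\ast}T_i B_i x, x\rangle = \langle T_i(B_i x), A_i x\rangle$. Squaring yields
\[
|\langle A_i^{\ast}T_i B_i x, x\rangle|^2 \leq \langle B_i^{\ast}f^2(|T_i|) B_i\, x, x\rangle \cdot \langle A_i^{\ast}g^2(|T_i^{\ast}|) A_i\, x, x\rangle .
\]
Abbreviate $U_i := B_i^{\ast}f^2(|T_i|) B_i$ and $V_i := A_i^{\ast}g^2(|T_i^{\ast}|) A_i$. Raising the displayed inequality to the $rp/2$ power (permissible since $rp \geq 1$) and invoking the weighted AM--GM inequality of Lemma \ref{le1}(a) with $\alpha=\tfrac12$ and exponent $rp$ — in the form $(ab)^{rp/2}\leq \tfrac12(a^{rp}+b^{rp})$ — produces
\[
|\langle A_i^{\ast}T_i B_i x, x\rangle|^{rp} \leq \tfrac12\bigl(\langle U_i x, x\rangle^{rp} + \langle V_i x, x\rangle^{rp}\bigr).
\]

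Because $U_i, V_i \geq 0$ and $rp\geq 1$, I then apply McCarthy's inequality (Lemma \ref{le2}(a)) to each scalar power on the right, which moves the exponent $rp$ inside the inner product and gives $\langle U_i x,x\rangle^{rp}\leq \langle U_i^{rp}x,x\rangle$, and similarly for $V_i$. Summing the resulting pointwise bound over $i=1,\dots,n$ and observing that $\sum_i(U_i^{rp}+V_i^{rp})$ is a positive operator, I obtain
\[
\sum_{i=1}^{n}|\langle A_i^{\ast}T_i B_i x, x\rangle|^{rp} \leq \tfrac12\left\langle \sum_{i=1}^{n}\bigl(U_i^{rp}+V_i^{rp}\bigr)x,x\right\rangle \leq \tfrac12\left\Vert\sum_{i=1}^{n}\bigl(U_i^{rp}+V_i^{rp}\bigr)\right\Vert .
\]
Taking the supremum over unit vectors $x\in\mathscr{H}$ then yields the stated bound.

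The heart of the argument is the sequencing of the three estimates — mixed Cauchy--Schwarz (to split the trilinear form into two positive quadratic forms), AM--GM (to replace a geometric mean by an arithmetic mean, producing the constant $\tfrac12$), and McCarthy (to transfer the $rp$-th power from outside to inside the inner product). The only non-trivial bookkeeping is to confirm that every exponent appearing is at least $1$, which is immediate from $p\geq 1$ and $r\geq 1$; once those verifications are made, no genuine obstacle remains.
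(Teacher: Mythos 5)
Your individual estimates are all sound, but the argument does not establish the stated inequality: there is a gap at the final step, where you identify what you have bounded with the left-hand side of the theorem. Pointwise you control $\sum_{i=1}^{n}\vert\langle A_i^{\ast}T_iB_i x,x\rangle\vert^{rp}$, and after taking the supremum over unit vectors this is $w_{rp}^{rp}\left( A_{1}^{\ast }T_{1}B_{1},\ldots ,A_{n}^{\ast}T_{n}B_{n}\right)$, not $w_{p}^{rp}\left(\cdots\right)=\sup_{\Vert x\Vert=1}\bigl(\sum_{i}\vert\langle A_i^{\ast}T_iB_i x,x\rangle\vert^{p}\bigr)^{r}$. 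Since $\sum_i a_i^{rp}\leq\bigl(\sum_i a_i^{p}\bigr)^{r}$ for $r\geq1$ and nonnegative $a_i$ (equivalently, $w_{rp}\leq w_{p}$), you have bounded the smaller of the two quantities, and the inequality needed to pass from your conclusion to the theorem's conclusion points in the wrong direction. The two quantities coincide only when $r=1$ or $n=1$; for $r>1$ and $n\geq2$ what you prove is strictly weaker than what is claimed.

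That said, the defect is arguably in the statement rather than in your chain of estimates. The paper's own proof keeps the exponent $p$ on each summand (Lemma \ref{le1}(c), then Lemma \ref{le2}(a), then Lemma \ref{le1}(a) with power $r$, then Lemma \ref{le2}(a) again) and at its last line needs $\sum_i d_i^{1/r}\leq\bigl(\sum_i d_i\bigr)^{1/r}$ for nonnegative $d_i$, which is false for $r>1$ and $n\geq2$. Indeed the theorem as printed fails for $n=2$, $T_i=A_i=B_i=I$, $f(t)=g(t)=t^{1/2}$, $p=1$, $r=2$: the left side equals $4$ while the right side equals $2$. Your sequence of steps --- mixed Cauchy--Schwarz, AM--GM with weight $\tfrac12$, McCarthy with exponent $rp$ --- is a clean proof of the correct substitute $w_{rp}^{rp}\left( A_{1}^{\ast }T_{1}B_{1},\ldots ,A_{n}^{\ast}T_{n}B_{n}\right)\leq\frac12\bigl\Vert\sum_{i=1}^{n}\bigl(\left[B_i^{\ast}f^{2}(\vert T_i\vert)B_i\right]^{rp}+\left[A_i^{\ast}g^{2}(\vert T_i^{\ast}\vert)A_i\right]^{rp}\bigr)\bigr\Vert$, but it is not a proof of the statement as given.
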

\begin{proof}
Let $x\in \mathscr{H}$ be a unit vector.
\begin{align*}
&\hspace{-0.3cm}\underset{i=1}{\overset{n}{\sum }}\left\vert \left\langle A_{i}^{\ast
}T_{i}B_{i}x,x\right\rangle \right\vert ^{p}\\
&=\underset{i=1}{\overset{n}{%
\sum }}\left\vert \left\langle T_{i}B_{i}x,A_{i}x\right\rangle \right\vert
^{p} \\
&\leq \underset{i=1}{\overset{n}{\sum }}\left\Vert f\left( \left\vert
T_{i}\right\vert \right) B_{i}x\right\Vert ^{p}\left\Vert g\left( \left\vert
T_{i}^{\ast }\right\vert \right) A_{i}x\right\Vert ^{p}
 \hspace{0.5cm}({\rm by\ Lemma}\ \ref{le1}(c)) \\
&=\underset{i=1}{\overset{n}{\sum }}\left\langle f\left( \left\vert
T_{i}\right\vert \right) B_{i}x,f\left( \left\vert T_{i}\right\vert \right)
B_{i}x\right\rangle ^{\frac{p}{2}}\left\langle g\left( \left\vert
T_{i}^{\ast }\right\vert \right) A_{i}x,g\left( \left\vert T_{i}^{\ast
}\right\vert \right) A_{i}x\right\rangle ^{\frac{p}{2}} \\
&=\underset{i=1}{\overset{n}{\sum }}\left\langle B_{i}^{\ast }f^{2}\left(
\left\vert T_{i}\right\vert \right) B_{i}x,x\right\rangle ^{\frac{p}{2}%
}\left\langle A_{i}^{\ast }g^{2}\left( \left\vert T_{i}^{\ast }\right\vert
\right) A_{i}x,x\right\rangle ^{\frac{p}{2}} \\
&\leq \underset{i=1}{\overset{n}{\sum }}\left\langle \left( B_{i}^{\ast
}f^{2}\left( \left\vert T_{i}\right\vert \right) B_{i}\right)
^{p}x,x\right\rangle ^{\frac{1}{2}}\left\langle \left( A_{i}^{\ast
}g^{2}\left( \left\vert T_{i}^{\ast }\right\vert \right) A_{i}\right)
^{p}x,x\right\rangle ^{\frac{1}{2}}\\
 &\hspace{7.5cm}({\rm by\ Lemma}\ \ref{le2}(a) )\\
&\leq \underset{i=1}{\overset{n}{\sum }}\left( \frac{1}{2}\left(
\left\langle \left( B_{i}^{\ast }f^{2}\left( \left\vert T_{i}\right\vert
\right) B_{i}\right) ^{p}x,x\right\rangle ^{r}+\left\langle \left(
A_{i}^{\ast }g^{2}\left( \left\vert T_{i}^{\ast }\right\vert \right)
A_{i}\right) ^{p}x,x\right\rangle ^{r}\right) \right) ^{\frac{1}{r}} \\
&\hspace{7.5cm}({\rm by\ Lemma}\ \ref{le1}(a) )\\
&\leq \underset{i=1}{\overset{n}{\sum }}\left( \frac{1}{2}\left\langle
\left( \left( B_{i}^{\ast }f^{2}\left( \left\vert T_{i}\right\vert \right)
B_{i}\right) ^{rp}+\left( A_{i}^{\ast }g^{2}\left( \left\vert T_{i}^{\ast
}\right\vert \right) A_{i}\right) ^{rp}\right) x,x\right\rangle \right) ^{
\frac{1}{r}} \\
&\hspace{7.5cm}({\rm by\ Lemma}\ \ref{le2}(a) )\\
&\leq \left( \frac{1}{2}\left\langle \underset{i=1}{\overset{n}{\sum }}
\left( \left( \left( B_{i}^{\ast }f^{2}\left( \left\vert T_{i}\right\vert
\right) B_{i}\right) ^{rp}+\left( A_{i}^{\ast }g^{2}\left( \left\vert
T_{i}^{\ast }\right\vert \right) A_{i}\right) ^{rp}\right) \right)
x,x\right\rangle \right) ^{\frac{1}{r}}
\end{align*}
Thus
\begin{align*}
&\hspace{-0.5cm}\left( \underset{i=1}{\overset{n}{\sum }}\left\vert \left\langle A_{i}^{\ast}T_{i}B_{i}x,x\right\rangle \right\vert ^{p}\right)^{r}\\
&\leq \frac{1}{2}
\left\langle \left( \underset{i=1}{\overset{n}{\sum }}\left( \left(
B_{i}^{\ast }f^{2}\left( \left\vert T_{i}\right\vert \right) B_{i}\right)
^{rp}+\left( A_{i}^{\ast }g^{2}\left( \left\vert T_{i}^{\ast }\right\vert
\right) A_{i}\right) ^{rp}\right) \right) x,x\right\rangle
\end{align*}
Now the result follows by taking the supremum over all unit vectors in $\mathscr{H}$.
\end{proof}
Choosing $A=B=I$, we get.
\begin{corollary}
Let $\left( T_{1},\ldots ,T_{n}\right) \in \mathbb{B}(\mathscr{H})^{(n)}$ and let $f$ and $g$ be nonnegative continuous functions on $\left[ 0,\infty
\right) $ satisfying $f\left( t\right)
g\left( t\right) =t$ for all $t\in \left[ 0,\infty \right) $. Then%
\begin{equation*}
w_{p}^{rp}\left( T_{1},\ldots ,T_{n}\right) \leq \frac{1}{2}\left\Vert
\underset{i=1}{\overset{n}{\sum }}\left( f^{2rp}\left( \left\vert
T_{i}\right\vert \right) +g^{2rp}\left( \left\vert T_{i}^{\ast }\right\vert
\right) \right) \right\Vert
\end{equation*}%
for $p\geq 1$ and $r\geq 1.$
\end{corollary}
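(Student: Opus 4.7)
The plan is straightforward: I would deduce this corollary as an immediate specialization of the preceding theorem by taking $A_i = B_i = I$ for every $i = 1, \ldots, n$. Under this choice the products $A_i^{\ast} T_i B_i$ collapse to $T_i$, so the left-hand side of the inequality in the theorem becomes $w_p^{rp}(T_1, \ldots, T_n)$, which is exactly the quantity being bounded in the statement.

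On the right-hand side, the conjugations $B_i^{\ast}(\cdot)B_i$ and $A_i^{\ast}(\cdot)A_i$ disappear, leaving $[f^{2}(|T_i|)]^{rp}$ and $[g^{2}(|T_i^{\ast}|)]^{rp}$ inside the sum. Here I would invoke the continuous functional calculus at the nonnegative operators $|T_i|$ and $|T_i^{\ast}|$. Since $f$ and $g$ are nonnegative on $[0,\infty)$, the operator $f^{2}(|T_i|)$ equals $\bigl(f(|T_i|)\bigr)^{2}$, and raising a positive operator to the power $rp$ commutes with functional calculus, so
\[
\bigl[f^{2}(|T_i|)\bigr]^{rp} = \bigl(f(|T_i|)\bigr)^{2rp} = f^{2rp}(|T_i|).
\]
The identical computation yields $[g^{2}(|T_i^{\ast}|)]^{rp} = g^{2rp}(|T_i^{\ast}|)$.

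Substituting these identities into the bound supplied by the preceding theorem produces exactly the stated inequality. There is no real obstacle in this argument; the only point one might want to verify explicitly is the routine functional-calculus identity $[f^{2}(S)]^{rp} = f^{2rp}(S)$ for $S \geq 0$, which follows because $t \mapsto f(t)^{2}$ composed with $s \mapsto s^{rp}$ is $t \mapsto f(t)^{2rp}$ on $[0,\infty)$.
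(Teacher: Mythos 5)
Your proposal is correct and is exactly the paper's argument: the authors derive this corollary by the single remark ``Choosing $A=B=I$, we get,'' i.e.\ by specializing the preceding theorem to $A_i=B_i=I$ and simplifying $\left[f^{2}\left(\left\vert T_{i}\right\vert\right)\right]^{rp}=f^{2rp}\left(\left\vert T_{i}\right\vert\right)$ via the functional calculus, just as you do.
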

Letting $f\left( t\right) =g\left( t\right) =t^{\frac{1}{2}}$, we get.
\begin{corollary}
Let $\left( T_{1},\ldots ,T_{n}\right) ,\left( A_{1},\ldots ,A_{n}\right)
,\left( B_{1},\ldots ,B_{n}\right)$ are in $\mathbb{B}(\mathscr{H})^{(n)}$%
. Then%
\begin{equation*}
w_{p}^{rp}\left( A_{1}^{\ast }T_{1}B_{1},\ldots ,A_{n}^{\ast
}T_{n}B_{n}\right) \leq \frac{1}{2}\left\Vert \underset{i=1}{\overset{n}{\sum }}\Big(
 \left( B_{i}^{\ast }\left\vert
T_{i}\right\vert B_{i}\right) ^{rp}+\left( A_{i}^{\ast }\left\vert
T_{i}^{\ast }\right\vert A_{i}\right) ^{rp}\Big)\right\Vert
\end{equation*}%
for $p\geq 1$ and $r\geq 1.$
\end{corollary}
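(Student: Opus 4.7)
The plan is to obtain this result as a direct specialization of the preceding theorem. The preceding theorem establishes
\begin{equation*}
w_{p}^{rp}\left( A_{1}^{\ast }T_{1}B_{1},\ldots ,A_{n}^{\ast
}T_{n}B_{n}\right) \leq \frac{1}{2}\left\Vert \underset{i=1}{\overset{n}{\sum }}\Big(
\left[ B_{i}^{\ast }f^{2}\left( \left\vert T_{i}\right\vert \right) B_{i}\right]^{rp}+\left[
A_{i}^{\ast }g^{2}\left( \left\vert T_{i}^{\ast }\right\vert \right) A_{i}\right]^{rp}\Big)\right\Vert
\end{equation*}
for any pair of nonnegative continuous functions $f,g$ on $[0,\infty)$ with $f(t)g(t)=t$. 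So the one and only task is to exhibit a valid choice of $f,g$ that collapses the right-hand side to the expression appearing in the corollary.

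The natural candidate, as flagged by the parenthetical remark preceding the statement, is $f(t)=g(t)=t^{1/2}$. I would first verify the hypothesis: $f(t)g(t)=t^{1/2}\cdot t^{1/2}=t$ for all $t\in[0,\infty)$, and both functions are nonnegative and continuous, so the theorem applies. Next, using the functional calculus for the positive operators $|T_{i}|$ and $|T_{i}^{\ast}|$, one has $f^{2}(|T_{i}|)=|T_{i}|$ and $g^{2}(|T_{i}^{\ast}|)=|T_{i}^{\ast}|$.

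Substituting these identities into the theorem yields
\begin{equation*}
w_{p}^{rp}\left( A_{1}^{\ast }T_{1}B_{1},\ldots ,A_{n}^{\ast
}T_{n}B_{n}\right) \leq \frac{1}{2}\left\Vert \underset{i=1}{\overset{n}{\sum }}\Big(
\left( B_{i}^{\ast }\left\vert T_{i}\right\vert B_{i}\right)^{rp}+\left(
A_{i}^{\ast }\left\vert T_{i}^{\ast }\right\vert A_{i}\right)^{rp}\Big)\right\Vert,
\end{equation*}
which is exactly the claimed inequality. There is no genuine obstacle here; the entire content is the observation that the square-root functions satisfy the factorization condition $fg=\mathrm{id}$, so the whole proof amounts to invoking the theorem and performing this one substitution.
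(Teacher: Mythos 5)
Your proof is correct and matches the paper exactly: the corollary is obtained by taking $f(t)=g(t)=t^{1/2}$ in the preceding theorem, which is precisely the specialization the paper indicates with the phrase ``Letting $f(t)=g(t)=t^{\frac12}$, we get.'' Your verification that $f(t)g(t)=t$ and that $f^{2}(|T_i|)=|T_i|$, $g^{2}(|T_i^{\ast}|)=|T_i^{\ast}|$ via the functional calculus is all that is needed.
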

\begin{corollary}
Let $\left( A_{1},\ldots ,A_{n}\right) ,\left( B_{1},\ldots ,B_{n}\right)
\in \mathbb{B}(\mathscr{H})^{(n)}$. Then%
\begin{equation*}
w_{p}^{rp}\left( A_{1}^{\ast }B_{1},\ldots ,A_{n}^{\ast }B_{n}\right) \leq
\frac{1}{2}\left\Vert \underset{i=1}{\overset{n}{\sum }}\Big( \left\vert B_{i}\right\vert ^{2rp}+\left\vert
A_{i}\right\vert ^{2rp}\Big)\right\Vert
\end{equation*}%
for $p\geq 1$ and $r\geq 1.$
\end{corollary}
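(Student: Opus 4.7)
The plan is to obtain this corollary as an immediate specialization of the preceding corollary (the $f(t)=g(t)=t^{1/2}$ case of the main theorem), which reads
\[
w_p^{rp}(A_1^*T_1B_1,\ldots,A_n^*T_nB_n) \leq \frac{1}{2}\left\|\sum_{i=1}^n\bigl((B_i^*|T_i|B_i)^{rp}+(A_i^*|T_i^*|A_i)^{rp}\bigr)\right\|
\]
for any $n$-tuple $(T_1,\ldots,T_n)\in\mathbb{B}(\mathscr{H})^{(n)}$. The idea is simply to plug in $T_i=I$ for every $i$.

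First I would substitute $T_i=I$ on the left side, giving $A_i^*T_iB_i=A_i^*B_i$, so that the left-hand side becomes exactly $w_p^{rp}(A_1^*B_1,\ldots,A_n^*B_n)$. Next, on the right-hand side I would observe that $|T_i|=|I|=I$ and $|T_i^*|=I$, so
\[
B_i^*|T_i|B_i = B_i^*B_i = |B_i|^2,\qquad A_i^*|T_i^*|A_i = A_i^*A_i = |A_i|^2.
\]
Since $|B_i|^2$ and $|A_i|^2$ are positive operators, their $rp$-th powers are well defined in the functional-calculus sense, and
\[
(B_i^*|T_i|B_i)^{rp}=(|B_i|^2)^{rp}=|B_i|^{2rp},\qquad (A_i^*|T_i^*|A_i)^{rp}=|A_i|^{2rp}.
\]

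Assembling these, the preceding corollary yields
\[
w_p^{rp}(A_1^*B_1,\ldots,A_n^*B_n) \leq \frac{1}{2}\left\|\sum_{i=1}^n\bigl(|B_i|^{2rp}+|A_i|^{2rp}\bigr)\right\|,
\]
which is exactly the claim for $p\geq 1$ and $r\geq 1$. There is no real obstacle here: the whole content is the bookkeeping of the specialization $T_i=I$, and the only small point worth pointing out explicitly is the identity $(|B_i|^2)^{rp}=|B_i|^{2rp}$ that follows from the spectral theorem for the positive operator $|B_i|$.
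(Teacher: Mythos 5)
Your proposal is correct and matches the paper's intended derivation: the corollary is exactly the specialization $T_i=I$ of the preceding corollary (the $f(t)=g(t)=t^{1/2}$ case), with $B_i^*B_i=|B_i|^2$, $A_i^*A_i=|A_i|^2$ and $(|B_i|^2)^{rp}=|B_i|^{2rp}$. The paper states this without proof, and your bookkeeping fills in precisely the steps it leaves implicit.
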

\begin{corollary}
Let $\left( T_{1},\ldots ,T_{n}\right) \in \mathbb{B}(\mathscr{H})^{(n)}$. Then%
\begin{equation*}
w_{p}^{p}\left( T_{1},\ldots ,T_{n}\right) \leq \frac{1}{2}\left\Vert
\underset{i=1}{\overset{n}{\sum }}\left( \left\vert T_{i}\right\vert
^{2\alpha p}+\left\vert T_{i}^{\ast }\right\vert ^{2\left( 1-\alpha \right)
p}\right) \right\Vert
\end{equation*}%
for $0\leq \alpha \leq 1,$ and $p\geq 1.$ In particular.%
\begin{equation*}
w_{p}^{p}\left( T_{1},\ldots ,T_{n}\right) \leq \frac{1}{2}\left\Vert
\underset{i=1}{\overset{n}{\sum }}\left( \left\vert T_{i}\right\vert
^{p}+\left\vert T_{i}^{\ast }\right\vert ^{p}\right) \right\Vert .
\end{equation*}
\end{corollary}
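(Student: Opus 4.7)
The plan is to derive this corollary as a direct specialization of the preceding corollary (the one obtained from the main theorem by setting $A_i=B_i=I$), which asserts that
\begin{equation*}
w_{p}^{rp}\left( T_{1},\ldots ,T_{n}\right) \leq \frac{1}{2}\left\Vert \sum_{i=1}^{n}\left( f^{2rp}(|T_{i}|)+g^{2rp}(|T_{i}^{\ast }|)\right) \right\Vert
\end{equation*}
for every pair of nonnegative continuous functions $f,g$ on $[0,\infty)$ with $f(t)g(t)=t$, and for all $p\geq1$, $r\geq1$. So the work reduces entirely to choosing $f$, $g$, and $r$ cleverly.

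The key observation is that for any fixed $\alpha\in[0,1]$ the pair $f(t)=t^{\alpha}$ and $g(t)=t^{1-\alpha}$ is nonnegative, continuous on $[0,\infty)$, and satisfies $f(t)g(t)=t^{\alpha}\cdot t^{1-\alpha}=t$, so it is admissible in the preceding corollary. Taking $r=1$ and substituting these $f$ and $g$, the functional calculus identity $f^{2p}(|T_i|)=|T_i|^{2\alpha p}$ and $g^{2p}(|T_i^{*}|)=|T_i^{*}|^{2(1-\alpha)p}$ delivers exactly the desired inequality
\begin{equation*}
w_{p}^{p}(T_{1},\ldots ,T_{n})\leq \frac{1}{2}\left\Vert \sum_{i=1}^{n}\left( |T_{i}|^{2\alpha p}+|T_{i}^{\ast }|^{2(1-\alpha) p}\right) \right\Vert.
\end{equation*}
The ``In particular'' clause then follows immediately by specializing to $\alpha=\tfrac{1}{2}$, which gives exponents $2\alpha p=p$ and $2(1-\alpha)p=p$.

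There is essentially no obstacle: the statement is a packaging of two independent choices (a parameter $\alpha$ encoding the factorization $t=t^{\alpha}\cdot t^{1-\alpha}$, and the scalar $r=1$) inside the previously established inequality. The only minor point worth flagging is that one must invoke the Borel functional calculus for positive operators to rewrite $f^{2p}(|T_i|)$ as $|T_i|^{2\alpha p}$; this is legitimate because $|T_i|$ and $|T_i^{*}|$ are positive self-adjoint, and the functions $t\mapsto t^{\alpha}$, $t\mapsto t^{1-\alpha}$ are continuous on $[0,\infty)$ for $\alpha\in[0,1]$.
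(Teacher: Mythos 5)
Your proposal is correct and follows exactly the route the paper intends: the corollary is the $A_i=B_i=I$ corollary specialized to $f(t)=t^{\alpha}$, $g(t)=t^{1-\alpha}$ (an admissible pair since $f(t)g(t)=t$) with $r=1$, and the ``in particular'' case is $\alpha=\tfrac12$. No issues.
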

\begin{corollary}
Let $B,C\in \mathbb{B}(\mathscr{H}) $. Then%
\begin{equation*}
w_{p}^{p}\left( B,C\right) \leq \frac{1}{2}\left\Vert \ \left\vert
B\right\vert ^{2\alpha p}+\left\vert B^{\ast }\right\vert ^{2\left( 1-\alpha
\right) p}+\left\vert C\right\vert ^{2\alpha p}+\left\vert C^{\ast
}\right\vert ^{2\left( 1-\alpha \right) p} \right\Vert
\end{equation*}%
for $0\leq \alpha \leq 1,$ and $p\geq 1.$ In particular.%
\begin{equation*}
w_{p}^{p}\left( B,C\right) \leq \frac{1}{2}\left\Vert \left\vert
B\right\vert ^{p}+\left\vert B^{\ast }\right\vert ^{p}+\left\vert
C\right\vert ^{p}+\left\vert C^{\ast }\right\vert ^{p} \right\Vert .
\end{equation*}
\end{corollary}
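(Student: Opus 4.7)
The plan is to derive this statement as a direct specialization, to the two-operator case, of the preceding corollary, namely
\begin{equation*}
w_{p}^{p}\left( T_{1},\ldots ,T_{n}\right) \leq \frac{1}{2}\left\Vert
\underset{i=1}{\overset{n}{\sum }}\left( \left\vert T_{i}\right\vert
^{2\alpha p}+\left\vert T_{i}^{\ast }\right\vert ^{2\left( 1-\alpha \right)
p}\right) \right\Vert,
\end{equation*}
which in turn is the $r=1$, $A_i=B_i=I$ instance of the main theorem with $f(t)=t^{\alpha}$ and $g(t)=t^{1-\alpha}$ (so that $f(t)g(t)=t$). First, I would invoke this corollary with $n=2$, $T_{1}=B$, $T_{2}=C$. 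The summation then consists of just two indices, and expanding gives exactly the four-term expression
\begin{equation*}
\tfrac{1}{2}\bigl\Vert \,|B|^{2\alpha p}+|B^{\ast }|^{2(1-\alpha )p}+|C|^{2\alpha p}+|C^{\ast }|^{2(1-\alpha )p}\,\bigr\Vert,
\end{equation*}
which upper-bounds $w_{p}^{p}(B,C)$, yielding the displayed inequality.

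For the ``in particular'' statement, I would simply set $\alpha =\tfrac{1}{2}$, so that $2\alpha p=2(1-\alpha )p=p$, and the four exponents all collapse to $p$. Since the entire argument is a substitution into an already-established inequality, there is no real obstacle; the only care needed is to verify that $f(t)=t^{\alpha}$ and $g(t)=t^{1-\alpha}$ are admissible in the main theorem (they are nonnegative, continuous on $[0,\infty)$, and their product is $t$), so that the chain of corollaries genuinely applies in the required generality $p\geq 1$, $0\leq\alpha\leq 1$.
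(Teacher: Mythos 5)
Your proposal is correct and matches the paper's (implicit) derivation exactly: the paper states this corollary without proof as the $n=2$, $T_1=B$, $T_2=C$ instance of the immediately preceding $n$-tuple corollary, which is itself the $r=1$, $A_i=B_i=I$, $f(t)=t^{\alpha}$, $g(t)=t^{1-\alpha}$ specialization of the main theorem. Your check that $f$ and $g$ are admissible (including at the endpoints $\alpha=0,1$) and the collapse of exponents at $\alpha=\tfrac12$ are exactly what is needed.
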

The next results are related to some different upper bounds for $w_p$ for $n$-tuples of operators, which have several inequalities as special cases.
\begin{proposition}
Let $\left( T_{1},\ldots ,T_{n}\right) \in \mathbb{B}(\mathscr{H})^{(n)} .$ Then%
\begin{equation*}
w_{p}\left( T_{1},\ldots ,T_{n}\right) \leq \frac{1}{2}\left\Vert
\underset{i=1}{\overset{n}{\sum }}\left( \left\vert T_{i}\right\vert
^{2\alpha }+\left\vert T_{i}^{\ast }\right\vert ^{2\left( 1-\alpha \right)
}\right) ^{p} \right\Vert ^{\frac{1}{p}}
\end{equation*}%
for $0\leq \alpha \leq 1,$ and $p\geq 1.$
\end{proposition}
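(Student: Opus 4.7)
The plan is to follow the same template as the preceding theorem but without the extra power $r$, so that the factor $1/2^{p}$ arising from an AM-GM step survives to become the $1/2$ coefficient after taking $p$-th roots.

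First I fix a unit vector $x\in\mathscr{H}$ and apply Lemma \ref{le1}(b) with $y=x$ to obtain
\[
|\langle T_i x,x\rangle|^{2}\le \langle |T_i|^{2\alpha}x,x\rangle\,\langle |T_i^{\ast}|^{2(1-\alpha)}x,x\rangle.
\]
Taking square roots and applying the scalar AM-GM from Lemma \ref{le1}(a) (with weights $\frac12,\frac12$ and $r=1$) to the right-hand side gives
\[
|\langle T_i x,x\rangle|\le\tfrac{1}{2}\bigl\langle \bigl(|T_i|^{2\alpha}+|T_i^{\ast}|^{2(1-\alpha)}\bigr)x,x\bigr\rangle.
\]

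Next, set $S_i:=|T_i|^{2\alpha}+|T_i^{\ast}|^{2(1-\alpha)}$, which is positive because it is the sum of two positive operators obtained by functional calculus. Raising the previous inequality to the $p$-th power and invoking McCarthy's inequality (Lemma \ref{le2}(a)) for the positive operator $S_i$ and $p\ge 1$, I get
\[
|\langle T_i x,x\rangle|^{p}\le\tfrac{1}{2^{p}}\langle S_i x,x\rangle^{p}\le\tfrac{1}{2^{p}}\langle S_i^{p}x,x\rangle.
\]

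Summing over $i$ and using linearity of the inner product,
\[
\sum_{i=1}^{n}|\langle T_i x,x\rangle|^{p}\le\tfrac{1}{2^{p}}\Bigl\langle \sum_{i=1}^{n}S_i^{p}x,x\Bigr\rangle\le\tfrac{1}{2^{p}}\Bigl\|\sum_{i=1}^{n}S_i^{p}\Bigr\|,
\]
where the last step uses that the sum $\sum_i S_i^{p}$ is positive, so its operator norm is attained as the supremum of $\langle\cdot\, x,x\rangle$ over unit vectors. Taking the $p$-th root and then the supremum over all unit vectors $x$ gives the stated inequality.

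The proof is essentially routine once the chain Lemma \ref{le1}(b) $\to$ Lemma \ref{le1}(a) $\to$ Lemma \ref{le2}(a) is identified; the only point that needs care is making sure the power $p$ in McCarthy is applied to the positive operator $S_i$ (not to each summand separately, which would require convexity of $t\mapsto t^{p}$ on the operator level and give a weaker bound). This is also what distinguishes the result from the earlier corollary where $|T_i|^{2\alpha p}+|T_i^{\ast}|^{2(1-\alpha)p}$ appears inside the norm rather than $\bigl(|T_i|^{2\alpha}+|T_i^{\ast}|^{2(1-\alpha)}\bigr)^{p}$.
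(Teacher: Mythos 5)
Your proof is correct and follows essentially the same route as the paper: Lemma \ref{le1}(b) with $y=x$, then the arithmetic--geometric mean inequality to produce the factor $\frac{1}{2^{p}}$, then Lemma \ref{le2}(a) applied to the positive operator $|T_i|^{2\alpha}+|T_i^{\ast}|^{2(1-\alpha)}$ as a whole, and finally the supremum over unit vectors. The only cosmetic difference is that you take square roots before raising to the $p$-th power, whereas the paper keeps everything at the power $p$ throughout; the chain of lemmas and the resulting bound are identical.
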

\begin{proof}
By using the arithmetic-geometric mean, for any unit vector $x\in\mathscr{H}$ we have
\begin{align*}
\underset{i=1}{\overset{n}{\sum }}\left\vert \left\langle
T_{i}x,x\right\rangle \right\vert ^{p} &\leq \underset{i=1}{\overset{n}{%
\sum }}\left( \left\langle \left\vert T_{i}\right\vert ^{2\alpha
}x,x\right\rangle ^{\frac{1}{2}}\left\langle \left\vert T_{i}^{\ast
}\right\vert ^{2\left( 1-\alpha \right) }x,x\right\rangle ^{\frac{1}{2}%
}\right) ^{p}\\
&\hspace{6cm}({\rm by\ Lemma}\ \ref{le1}(b) )\\
&\leq \frac{1}{2^{p}}\underset{i=1}{\overset{n}{\sum }}\left( \left\langle
\left\vert T_{i}\right\vert ^{2\alpha }x,x\right\rangle +\left\langle
\left\vert T_{i}^{\ast }\right\vert ^{2\left( 1-\alpha \right)
}x,x\right\rangle \right) ^{p} \\
&=\frac{1}{2^{p}}\underset{i=1}{\overset{n}{\sum }}\left\langle \left(
\left\vert T_{i}\right\vert ^{2\alpha }+\left\vert T_{i}^{\ast }\right\vert
^{2\left( 1-\alpha \right) }\right) x,x\right\rangle ^{p}. \\
&\leq \frac{1}{2^{p}}\underset{i=1}{\overset{n}{\sum }}\left\langle \left(
\left\vert T_{i}\right\vert ^{2\alpha }+\left\vert T_{i}^{\ast }\right\vert
^{2\left( 1-\alpha \right) }\right) ^{p}x,x\right\rangle\\
&\hspace{6cm}({\rm by\ Lemma}\ \ref{le2}(a) )
\end{align*}
Now the result follows by taking the supremum over all unit vectors in $\mathscr{H}$.
\end{proof}
\begin{proposition}
Let $\left( T_{1},\ldots ,T_{n}\right) \in \mathbb{B}(\mathscr{H})^{(n)}$. Then%
\begin{equation*}
w_{p}\left( T_{1},\ldots ,T_{n}\right) \leq \left\Vert \underset{i=1}{%
\overset{n}{\sum }}\left( \alpha \left\vert T_{i}\right\vert ^{p}+\left(
1-\alpha \right) \left\vert T_{i}^{\ast }\right\vert ^{p}\right) \right\Vert
^{\frac{1}{p}}
\end{equation*}%
for $0\leq \alpha \leq 1,$ and $p\geq 2.$
\end{proposition}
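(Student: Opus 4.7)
The plan is to follow the architecture of the preceding proposition---mixed Cauchy--Schwarz followed by McCarthy---but insert a Young-type step so that the exponents on $|T_i|$ and $|T_i^*|$ end up being exactly $p$ (rather than $p$ applied to $|T_i|^{2\alpha} + |T_i^*|^{2(1-\alpha)}$ as a lump). Fix a unit vector $x \in \mathscr{H}$. By Lemma \ref{le1}(b) applied to $T_i$ with $y = x$,
\[
|\langle T_i x, x \rangle|^2 \leq \langle |T_i|^{2\alpha} x, x \rangle \, \langle |T_i^*|^{2(1-\alpha)} x, x \rangle,
\]
so, raising both sides to the power $p/2 \geq 1$,
\[
|\langle T_i x, x \rangle|^p \leq \langle |T_i|^{2\alpha} x, x \rangle^{p/2} \, \langle |T_i^*|^{2(1-\alpha)} x, x \rangle^{p/2}.
\]

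The next move is to apply the weighted AM--GM inequality of Lemma \ref{le1}(a) (with $r = 1$), in the form $A^\alpha B^{1-\alpha} \leq \alpha A + (1 - \alpha) B$, to
\[
A = \langle |T_i|^{2\alpha} x, x \rangle^{p/(2\alpha)}, \qquad B = \langle |T_i^*|^{2(1-\alpha)} x, x \rangle^{p/(2(1-\alpha))},
\]
which is engineered so that $A^\alpha B^{1-\alpha}$ coincides with the right-hand side of the previous display. This produces
\[
|\langle T_i x, x \rangle|^p \leq \alpha \, \langle |T_i|^{2\alpha} x, x \rangle^{p/(2\alpha)} + (1-\alpha) \, \langle |T_i^*|^{2(1-\alpha)} x, x \rangle^{p/(2(1-\alpha))}.
\]
Because $p \geq 2$ and $0 \leq \alpha \leq 1$, both exponents $p/(2\alpha)$ and $p/(2(1-\alpha))$ are at least $1$, so Lemma \ref{le2}(a) applied to the positive operators $|T_i|^{2\alpha}$ and $|T_i^*|^{2(1-\alpha)}$ pushes the outer powers inside:
\[
\langle |T_i|^{2\alpha} x, x \rangle^{p/(2\alpha)} \leq \langle |T_i|^p x, x \rangle, \qquad \langle |T_i^*|^{2(1-\alpha)} x, x \rangle^{p/(2(1-\alpha))} \leq \langle |T_i^*|^p x, x \rangle.
\]
Summing over $i$ and collecting the inner products gives
\[
\sum_{i=1}^n |\langle T_i x, x \rangle|^p \leq \left\langle \sum_{i=1}^n \bigl( \alpha |T_i|^p + (1-\alpha) |T_i^*|^p \bigr) x, x \right\rangle \leq \left\| \sum_{i=1}^n \bigl( \alpha |T_i|^p + (1-\alpha) |T_i^*|^p \bigr) \right\|,
\]
and taking the supremum over unit $x$ followed by the $p$-th root delivers the asserted bound.

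The only real obstacle is calibrating the Young and McCarthy exponents simultaneously: the hypothesis $p \geq 2$ is exactly what is required for both $p/(2\alpha)$ and $p/(2(1-\alpha))$ to be admissible McCarthy exponents uniformly in $\alpha \in (0,1)$, and this is the precise reason the assumption strengthens $p \geq 1$ to $p \geq 2$ relative to the preceding proposition. The degenerate endpoints $\alpha \in \{0, 1\}$ render the Young step vacuous; they are handled either by continuity in $\alpha$ or by a one-line direct argument from Lemma \ref{le1}(b) followed by Lemma \ref{le2}(a) applied to the sole surviving factor.
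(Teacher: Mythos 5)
Your proof is correct. It uses the same raw ingredients as the paper's own argument---the mixed Cauchy--Schwarz inequality of Lemma \ref{le1}(b), McCarthy's inequality (Lemma \ref{le2}), and the weighted arithmetic--geometric mean inequality of Lemma \ref{le1}(a)---but permutes them. The paper's chain is: Cauchy--Schwarz, then McCarthy (a) with exponent $p/2$ to get $\langle |T_i|^{\alpha p}x,x\rangle\langle |T_i^*|^{(1-\alpha)p}x,x\rangle$, then McCarthy (b) with exponents $\alpha$ and $1-\alpha$ to reach $\langle |T_i|^{p}x,x\rangle^{\alpha}\langle |T_i^*|^{p}x,x\rangle^{1-\alpha}$, and only then the AM--GM step. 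You instead apply AM--GM immediately after Cauchy--Schwarz, with the engineered arguments $\langle |T_i|^{2\alpha}x,x\rangle^{p/(2\alpha)}$ and $\langle |T_i^*|^{2(1-\alpha)}x,x\rangle^{p/(2(1-\alpha))}$, and then finish with a single round of McCarthy (a). Your route avoids Lemma \ref{le2}(b) entirely and is one inequality shorter; it is also marginally more general, since for a fixed $\alpha\in(0,1)$ your McCarthy step only requires $p\ge 2\max(\alpha,1-\alpha)$ rather than $p\ge 2$ (in particular $\alpha=\tfrac12$ works for all $p\ge 1$, consistent with the earlier corollary and Remark following this proposition), whereas the paper's use of McCarthy (a) with exponent $p/2$ forces $p\ge 2$ for every $\alpha$. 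Your treatment of the degenerate endpoints $\alpha\in\{0,1\}$ is also sound.
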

\begin{proof}
For every unit vector \ $x\in \mathscr{H}$, we have
\begin{align*}
&\hspace{-0.2cm}\underset{i=1}{\overset{n}{\sum }}\left\vert \left\langle
T_{i}x,x\right\rangle \right\vert ^{p}\\&=\underset{i=1}{\overset{n}{\sum }}
\left( \left\vert \left\langle T_{i}x,x\right\rangle \right\vert ^{2}\right)
^{\frac{p}{2}} \\
&\leq \underset{i=1}{\overset{n}{\sum }}\left( \left\langle \left\vert
T_{i}\right\vert ^{2\alpha }x,x\right\rangle \left\langle \left\vert
T_{i}^{\ast }\right\vert ^{2\left( 1-\alpha \right) }x,x\right\rangle
\right) ^{\frac{p}{2}} \hspace{.5cm}({\rm by\ Lemma}\ \ref{le1}(b) )\\
&\leq \underset{i=1}{\overset{n}{\sum }}\left\langle \left\vert
T_{i}\right\vert ^{\alpha p}x,x\right\rangle \left\langle \left\vert
T_{i}^{\ast }\right\vert ^{\left( 1-\alpha \right) p}x,x\right\rangle
\hspace{1.3cm}({\rm by\ Lemma}\ \ref{le2}(a) ) \\
&\leq \underset{i=1}{\overset{n}{\sum }}\left\langle \left\vert
T_{i}\right\vert ^{p}x,x\right\rangle ^{\alpha }\left\langle \left\vert
T_{i}^{\ast }\right\vert ^{p}x,x\right\rangle ^{\left( 1-\alpha \right) }
\hspace{1.4cm}({\rm by\ Lemma}\ \ref{le2}(b) ) \\
&\leq \underset{i=1}{\overset{n}{\sum }}\Big( \alpha \left\langle
\left\vert T_{i}\right\vert ^{p}x,x\right\rangle +( 1-\alpha )
\left\langle \left\vert T_{i}^{\ast }\right\vert ^{p}x,x\right\rangle \Big)
\hspace{0cm}({\rm by\ Lemma}\ \ref{le1}(a) )\\
&\leq \underset{i=1}{\overset{n}{\sum }}\left\langle \Big( \alpha
\left\vert T_{i}\right\vert ^{p}+\left( 1-\alpha \right) \left\vert
T_{i}^{\ast }\right\vert ^{p}\Big) x,x\right\rangle \\
&=\left\langle \left( \underset{i=1}{\overset{n}{\sum }}\left( \alpha
\left\vert T_{i}\right\vert ^{p}+\left( 1-\alpha \right) \left\vert
T_{i}^{\ast }\right\vert ^{p}\right) \right) x,x\right\rangle .
\end{align*}%
Now the result follows by taking the supremum over all unit vectors in $\mathscr{H}$.
\end{proof}
\begin{remark}
As special cases,

(1) For $\alpha =\frac{1}{2}$, we have%
\begin{equation*}
w_{p}^{p}\left( T_{1},\ldots ,T_{n}\right) \leq \frac{1}{2}\left\Vert
\underset{i=1}{\overset{n}{\sum }}\left( \left\vert T_{i}\right\vert
^{p}+\left\vert T_{i}^{\ast }\right\vert ^{p}\right) \right\Vert .
\end{equation*}

(2) For $B,C\in \mathbb{B}(\mathscr{H}) ,0\leq \alpha \leq 1,$ and $p\geq 1,$ we have%
\begin{equation*}
w_{p}^{p}\left( B,C\right) \leq \left\Vert \alpha \left\vert B\right\vert
^{p}+\left( 1-\alpha \right) \left\vert B^{\ast }\right\vert ^{p}+\alpha
\left\vert C\right\vert ^{p}+\left( 1-\alpha \right) \left\vert C^{\ast
}\right\vert ^{p}\right\Vert .
\end{equation*}%
In particular,%
\begin{equation*}
w_{p}^{p}\left( B,C\right) \leq \frac{1}{2}\left\Vert \left\vert
B\right\vert ^{p}+\left\vert B^{\ast }\right\vert ^{p}+\left\vert
C\right\vert ^{p}+\left\vert C^{\ast }\right\vert ^{p}\right\Vert .
\end{equation*}
\end{remark}
The next result reads as follows.
\begin{proposition}
Let $\left( T_{1},\ldots ,T_{n}\right) \in \mathbb{B}(\mathscr{H})^{(n)}, 0\leq \alpha \leq 1,r\geq 1$ and $p\geq 1$. Then%
\begin{equation*}
w_{p}\left( T_{1},\ldots ,T_{n}\right) \leq \left( \underset{i=1}{\overset{n}%
{\sum }}\left\Vert \alpha \left\vert T_{i}\right\vert ^{2r}+\left( 1-\alpha
\right) \left\vert T_{i}^{\ast }\right\vert ^{2r}\right\Vert ^{\frac{p}{2r}%
}\right) ^{\frac{1}{p}}.
\end{equation*}
\end{proposition}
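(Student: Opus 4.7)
My plan is to imitate the structure of the immediately preceding proof but use the concave branch of McCarthy (Lemma \ref{le2}(b)) in order to manufacture the power $2r$ on $|T_i|$ and $|T_i^{\ast}|$. Concretely, fix a unit vector $x\in\mathscr{H}$ and start from the mixed Cauchy--Schwarz inequality of Lemma \ref{le1}(b):
\begin{equation*}
|\langle T_i x, x\rangle|^{2} \leq \langle |T_i|^{2\alpha} x, x\rangle\,\langle |T_i^{\ast}|^{2(1-\alpha)} x, x\rangle.
\end{equation*}

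Next, since $0\leq \alpha\leq 1\leq r$, both $\alpha/r$ and $(1-\alpha)/r$ lie in $[0,1]$, so Lemma \ref{le2}(b) applied to the positive operators $|T_i|^{2r}$ and $|T_i^{\ast}|^{2r}$ gives
\begin{equation*}
\langle |T_i|^{2\alpha} x, x\rangle = \langle (|T_i|^{2r})^{\alpha/r} x, x\rangle \leq \langle |T_i|^{2r} x, x\rangle^{\alpha/r},
\end{equation*}
and similarly with $T_i^{\ast}$ in place of $T_i$ and $1-\alpha$ in place of $\alpha$. Substituting and then factoring out the $1/r$ exponent gives
\begin{equation*}
|\langle T_i x, x\rangle|^{2} \leq \Bigl(\langle |T_i|^{2r} x, x\rangle^{\alpha} \langle |T_i^{\ast}|^{2r} x, x\rangle^{1-\alpha}\Bigr)^{1/r}.
\end{equation*}
Now I apply the weighted AM--GM inequality (Lemma \ref{le1}(a)) to the expression inside the parentheses with scalars $a=\langle |T_i|^{2r}x,x\rangle$ and $b=\langle |T_i^{\ast}|^{2r}x,x\rangle$, which collapses the product into the convex combination $\langle(\alpha|T_i|^{2r}+(1-\alpha)|T_i^{\ast}|^{2r})x,x\rangle$. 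This yields
\begin{equation*}
|\langle T_i x, x\rangle|^{2} \leq \bigl\langle\bigl(\alpha|T_i|^{2r}+(1-\alpha)|T_i^{\ast}|^{2r}\bigr)x,x\bigr\rangle^{1/r}.
\end{equation*}

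To finish, I raise both sides to the power $p/2$ and bound the resulting right-hand side by the operator norm via $\langle Ax,x\rangle\leq\|A\|$ (valid for positive $A$ and $\|x\|=1$), obtaining
\begin{equation*}
|\langle T_i x, x\rangle|^{p} \leq \bigl\Vert\alpha|T_i|^{2r}+(1-\alpha)|T_i^{\ast}|^{2r}\bigr\Vert^{p/(2r)}.
\end{equation*}
Summing over $i$, taking the $p$-th root, and then the supremum over unit vectors $x$, gives precisely the claim.

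The only subtle point I expect is the exponent bookkeeping at the second step: one must check that $\alpha/r\leq 1$ in order for the concave McCarthy inequality to apply (rather than the convex branch used in the previous propositions). The rest is a routine alignment of Young's inequality with the $1/r$ factor followed by the standard $\langle Ax,x\rangle\leq\|A\|$ passage.
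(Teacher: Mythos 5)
Your proof is correct, and it lands on exactly the same key pointwise estimate as the paper, namely $|\langle T_i x,x\rangle|^{p}\leq \langle(\alpha|T_i|^{2r}+(1-\alpha)|T_i^{\ast}|^{2r})x,x\rangle^{p/(2r)}$ for each unit vector $x$, followed by the same passage to the norm. The difference is only in how the chain of lemmas is organized. The paper applies Lemma \ref{le1}(b), then Lemma \ref{le2}(b) to $|T_i|^{2}$ with exponent $\alpha$, then the $r$-weighted power-mean form of Lemma \ref{le1}(a), namely $a^{\alpha}b^{1-\alpha}\leq[\alpha a^{r}+(1-\alpha)b^{r}]^{1/r}$, and finally the convex branch Lemma \ref{le2}(a) to replace $\langle |T_i|^{2}x,x\rangle^{r}$ by $\langle |T_i|^{2r}x,x\rangle$. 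You instead absorb the factor $r$ at the McCarthy step by applying the concave branch Lemma \ref{le2}(b) directly to $|T_i|^{2r}$ with exponent $\alpha/r\in[0,1]$, which lets you finish with the plain weighted AM--GM and skip Lemma \ref{le2}(a) altogether; this is a slightly leaner bookkeeping of the same ingredients. Your flagged subtlety ($\alpha/r\leq 1$) is indeed the point that makes this shortcut legitimate; the only cosmetic remark is that at $\alpha=0$ or $\alpha=1$ one exponent is $0$ rather than strictly positive, but there the corresponding McCarthy inequality is a trivial equality, exactly as in the paper's own use of Lemma \ref{le2}(b) with exponent $\alpha$.
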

\begin{proof}
Let $x\in \mathscr{H}$ be a unit vector.
\begin{align*}
&\hspace{-0.2cm}\underset{i=1}{\overset{n}{\sum }}\left\vert \left\langle
T_{i}x,x\right\rangle \right\vert ^{p}\\
&=\underset{i=1}{\overset{n}{\sum }}%
\left( \left\vert \left\langle T_{i}x,x\right\rangle \right\vert ^{2}\right)
^{\frac{p}{2}} \\
&\leq \underset{i=1}{\overset{n}{\sum }}\left( \left\langle \left\vert
T_{i}\right\vert ^{2\alpha }x,x\right\rangle \left\langle \left\vert
T_{i}^{\ast }\right\vert ^{2\left( 1-\alpha \right) }x,x\right\rangle
\right) ^{\frac{p}{2}} \hspace{1cm}({\rm by\ Lemma}\ \ref{le1}(b) )\\
&\leq \underset{i=1}{\overset{n}{\sum }}\left( \left\langle \left\vert
T_{i}\right\vert ^{2}x,x\right\rangle ^{\alpha }\left\langle \left\vert
T_{i}^{\ast }\right\vert ^{2}x,x\right\rangle ^{\left( 1-\alpha \right)
}\right) ^{\frac{p}{2}} \hspace{1cm}({\rm by\ Lemma}\ \ref{le2}(b) )\\
&\leq \underset{i=1}{\overset{n}{\sum }}\left( \alpha \left\langle
\left\vert T_{i}\right\vert ^{2}x,x\right\rangle ^{r}+\left( 1-\alpha
\right) \left\langle \left\vert T_{i}^{\ast }\right\vert
^{2}x,x\right\rangle ^{r}\right) ^{\frac{p}{2r}}
\hspace{0cm}({\rm by\ Lemma}\ \ref{le1}(a) )\\
&\leq \underset{i=1}{\overset{n}{\sum }}\Big( \alpha \left\langle
\left\vert T_{i}\right\vert ^{2r}x,x\right\rangle +\left( 1-\alpha \right)
\left\langle \left\vert T_{i}^{\ast }\right\vert ^{2r}x,x\right\rangle
\Big) ^{\frac{p}{2r}} \hspace{0cm}({\rm by\ Lemma}\ \ref{le2}(a) )\\
&\leq \underset{i=1}{\overset{n}{\sum }}\left\langle \left( \alpha
\left\vert T_{i}\right\vert ^{2r}+\left( 1-\alpha \right) \left\vert
T_{i}^{\ast }\right\vert ^{2r}\right) x,x\right\rangle ^{\frac{p}{2r}}.
\end{align*}%
Now the result follows by taking the supremum over all unit vectors in $\mathscr{H}$.
\end{proof}
\begin{remark}
Some special cases can be stated as follows:

(1) For $\alpha =\frac{1}{2}$, we have
\begin{equation*}
w_{p}\left( T_{1},\ldots ,T_{n}\right) \leq \left( \frac{1}{2^{\frac{p}{2r}}}%
\underset{i=1}{\overset{n}{\sum }}\left\Vert \left\vert T_{i}\right\vert
^{2r}+\left\vert T_{i}^{\ast }\right\vert ^{2r}\right\Vert ^{\frac{p}{2r}%
}\right) ^{\frac{1}{p}}.
\end{equation*}

(2) For $B,C\in \mathbb{B}(\mathscr{H}) ,0\leq \alpha \leq 1,$ and $p\geq 1,$ we have%
\begin{align*}
&\hspace{-0.2cm}w_{p}\left( B,C\right)\\
&\leq \left( \left\Vert \alpha \left\vert B\right\vert
^{2r}+\left( 1-\alpha \right) \left\vert B^{\ast }\right\vert
^{2r}\right\Vert ^{\frac{p}{2r}}+\left\Vert \alpha \left\vert C\right\vert
^{2r}+\left( 1-\alpha \right) \left\vert C^{\ast }\right\vert
^{2r}\right\Vert ^{\frac{p}{2r}}\right) ^{\frac{1}{p}}.
\end{align*}
In particular,
\begin{equation*}
w_{p}\left( B,C\right) \leq \frac{1}{2^{\frac{1}{2r}}}\left( \left\Vert
\left\vert B\right\vert ^{2r}+\left\vert B^{\ast }\right\vert
^{2r}\right\Vert ^{\frac{p}{2r}}+\left\Vert \left\vert C\right\vert
^{2r}+\left\vert C^{\ast }\right\vert ^{2r}\right\Vert ^{\frac{p}{2r}%
}\right) ^{\frac{1}{p}}.
\end{equation*}
\end{remark}
\vspace{10 pt}

\bigskip
\bibliographystyle{amsplain}

\end{document}